\newcommand\numberthis{\addtocounter{equation}{1}\tag{\theequation}}
\newlength{\bibitemsep}\setlength{\bibitemsep}{.6\baselineskip}
\newlength{\bibparskip}\setlength{\bibparskip}{0pt}
\let\oldthebibliography\thebibliography
\renewcommand\thebibliography[1]{%
	\oldthebibliography{#1}%
	\setlength{\parskip}{\bibitemsep}%
	\setlength{\itemsep}{\bibparskip}%
}
\newtheorem{thm}{Theorem}[section]
\newtheorem{cor}[thm]{Corollary}
\newtheorem{lem}[thm]{Lemma}
\newtheorem{prop}[thm]{Proposition}
\numberwithin{equation}{section} 
\theoremstyle{definition}
\newtheorem{rem}[thm]{Remark}
\newtheorem{eg}[thm]{Example}
\newcommand{\HS}{\mathcal{H}}
\newcommand{\LS}{\mathcal{L}}
\newcommand{\KS}{\mathcal{K}}
\newcommand{\B}{\mathcal{B}}
\newcommand{\V}{\mathcal{V}}
\newcommand{\M}{\mathcal{M}}
\newcommand{\x}{\mathbf{x}}
\newcommand{\y}{\mathbf{y}}
\title[Distance and best approximations in operator norm and trace class norm]{Distance and best approximations in operator norm and trace class norm}
\author{Saikat Roy}
\address{Saikat Roy, Department of Mathematics, Indian Institute of Science, Bengaluru 560012, Karnataka, India.}
\email{saikatroy.cu@gmail.com}
\keywords{Best approximations, Trace class operators, Compact operators; Finite-dimensional $C^*$-algebra}
\subjclass[2020]{Primary 46B28, 46B02 Secondary 47L05.}
\thanks{}
\begin{document}

\maketitle

\begin{abstract}
We study the best approximation and distance problems in the operator space $\B(\HS)$ and in the space of trace class operators $\LS^1(\B(\HS))$. Formulations of distances are obtained in both cases. The case of finite-dimensional $C^*$-algebras is also considered. The computational advantage of the results is illustrated through examples.
\end{abstract}

\tableofcontents

\section{Introduction}

\noindent Throughout the article $\HS$ denotes an infinite-dimensional separable Hilbert space over $\mathbb{C}$, equipped with an inner product $\langle \cdot, \cdot \rangle$. The symbol $S_\HS$ stands for the unit sphere of $\HS$. For any sequence $(\zeta_n)\subset \HS$, we write $\zeta_n\xrightarrow{w} \zeta_0$ to mean $(\zeta_n)$ weakly converges to $\zeta_0$. Let $\B(\HS)$ denote the $C^*$-algebra of bounded linear operators on $\HS$, $\KS(\HS)$, the ideal of compact operators, and $\LS^1(\B(\HS))$, the ideal of trace class operators of $\B(\HS)$. If $\V$ is a finite-dimensional subspace of $\B(\HS)$, we define
\[
N_\V := \{a\in \B(\HS):~Tr(ay)=0,~y\in \V\} = \{a\in \B(\HS):~Tr(ay_j)=0,~1\leq j \leq k\}
\]
for any basis $\{y_1,y_2,\dots, y_k\}$ of $\V$. For any two vectors $\xi$ and $\zeta$ in $\HS$, $\xi\otimes \zeta$ denotes the rank-one operator $\eta\mapsto \langle \eta, \zeta\rangle \zeta$ ($\eta\in \HS$). spectrum of an element $x\in \B(\HS)$ is denoted by $\sigma(x)$.

\subsection{Motivation} A fundamental result due to Holmes and Kripke \cite{HK} asserts that $\KS(\HS)$ is a proximinal subspace of $\B(\HS)$ and for any $x\in \B(\HS)\setminus \KS(\HS)$, the distance from $x$ to $\KS(\HS)$ is given by $dist(x,\KS(\HS))=\Delta(x)$, where $\Delta(x):=\sup\{\lim_n\sup~\|x\zeta_n\|:~(\zeta_n)\subset S_\HS,~\zeta_n\xrightarrow{w} \mathbf{0}\}$ (see also \cite{ABJS, GK}). It is natural to ask whether $dist(x,\V)=\Delta(x)$, if we consider a proper subspace $\V$ of $\KS(\HS)$ instead of the whole space $\KS(\HS)$. The answer to this question is negative, as shown in Example \ref{Holmes Example}. Thus, the quantity $\Delta(x)$ becomes a lower bound of $dist(x,\V)$. This leads to the question: what is the exact formulation of $dist(x,\V)$ for $x\in \B(\HS)\setminus \KS(\HS)$ and a proper subspace $\V$ of $\KS(\HS)$? In this article, we formulate $dist(x,\V)$ in two cases. In the first case, we consider $\V$ to be any finite-dimensional subspace of $\B(\HS)$, and this answers the above question in the general framework of $\B(\HS)$ which in particular applies to the case of $\KS(\HS)$. In the second case, we consider the distance problem in the trace class norm when $x\in \LS^1(\B(\HS))$ and $\V$ is any finite-dimensional subspace of $\LS^1(\B(\HS))$.

\subsection{Approach} A general formulation of distance from a point $x$ to a subspace $Y$ in a Banach space $X$ is given by the classical duality principle \cite{Singer}. In \cite{Sain}, an alternative approach is presented for \emph{real} reflexive Banach spaces, sometimes under the assumption of strict convexity, which is focused on norm attainment of functionals. Such an approach is computationally advantageous as pointed out in \cite[Section 4] {Sain}, \cite{SR}. Here we consider the duality relations between $\KS(\HS)$, $\LS^1(\B(\HS))$ and $\B(\HS)$, and utilize the $\sigma$-WOT ($\sigma$-weak operator topology) compactness of the closed unit ball of $\B(\HS)$ to prove our results. Our methods are based on Banach space theoretic and elementary operator algebraic techniques. 

\medskip

\noindent The space $\B(\HS)$ is identified to the dual of $\LS^1(\B(\HS))$ through the isometric isomorphism $\Omega: \B(\HS)\to \LS^1(\B(\HS))^*$, defined as
\[
\Omega(x)=\rho_x, \quad x\in \B(\HS),~ \text{where}~ \rho_x(a) = Tr(ax), \quad a\in \LS^1(\B(\HS)). \numberthis \label{dual trac.}
\]
Similarly, the space $\LS^1(\B(\HS))$ can be identified to the dual of $\KS(\HS)$ through the isometric isomorphism $\Gamma: \LS^1(\B(\HS))\to \KS(\HS)^*$, defined as
\[
\Gamma(x)=\tau_x, \quad x\in \LS^1(\B(\HS)),~ \text{where}~ \tau_x(a) = Tr(ax), \quad a\in \KS(\HS). \numberthis \label{dual comp.}
\]
Adapting such an approach, we formulate the distance between a point and a subspace in $\B(\HS)$ both under the usual operator norm and trace norm, in terms of trace.
\begin{thm}\label{Thm: Oper.}
Let $\HS$ be a Hilbert space and $\V$ be a finite-dimensional subspace of $\B(\HS)$. Let $x\in \B(\HS)\setminus \V$. Then
\begin{equation}\label{I}
\begin{aligned}
dist(x,\V) & =\sup\{|Tr(ax)|:~a\in N_\V\cap \LS^1(\B(\HS)),~\|a\|_1=1\}\\
& \geq \sup\{|Tr((\xi\otimes \zeta)x)|:~(\xi\otimes \zeta)\in N_\V, \|\xi\|=\|\zeta\|=1\}.
\end{aligned}
\end{equation}
\end{thm}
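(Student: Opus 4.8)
The plan is to recognise \eqref{I} as a concrete instance of the classical duality principle \cite{Singer} for the distance from a point to a subspace, applied to the dual pairing between $\LS^1(\B(\HS))$ and $\B(\HS)$ given by the isometric isomorphism $\Omega$. Fix a basis $\{y_1,\dots,y_k\}$ of $\V$ and put $W:=N_\V\cap\LS^1(\B(\HS))$. Since $W=\bigcap_{j=1}^{k}\ker\bigl(a\mapsto Tr(ay_j)\bigr)$ and each functional $a\mapsto Tr(ay_j)$ is bounded on $\LS^1(\B(\HS))$ (with norm at most $\|y_j\|$), $W$ is a norm-closed subspace of $\LS^1(\B(\HS))$ of finite codimension; in particular $W\neq\{0\}$ because $\LS^1(\B(\HS))$ is infinite-dimensional.

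The key step is to identify the annihilator $W^{\perp}$ of $W$ inside $\LS^1(\B(\HS))^{*}\cong\B(\HS)$ with $\V$; concretely, $W^{\perp}=\Omega(\V)$. The inclusion $\Omega(\V)\subseteq W^{\perp}$ is immediate from the definition of $N_\V$: for $y\in\V$ and $a\in W$ one has $\rho_y(a)=Tr(ay)=0$. For the reverse inclusion I would use that a finite-dimensional subspace $\V$ is closed in the $\sigma(\B(\HS),\LS^1(\B(\HS)))$-topology, whose continuous dual is exactly $\LS^1(\B(\HS))$; hence if $x_0\in\B(\HS)\setminus\V$, Hahn--Banach separation in that locally convex topology produces $a\in\LS^1(\B(\HS))$ with $Tr(ay)=0$ for all $y\in\V$ (so $a\in W$) but $Tr(ax_0)\neq 0$, i.e.\ $\Omega(x_0)\notin W^{\perp}$. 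Equivalently, one may quote the bipolar theorem, $({}_{\perp}\V)^{\perp}=\V$, valid since $\V$ is weak*-closed. I expect this bipolar/separation step to be the main obstacle: it is the only place where finite-dimensionality of $\V$ is genuinely used, and one must be careful to invoke the weak* form of Hahn--Banach.

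Granting $W^{\perp}=\Omega(\V)$, I would invoke the standard fact that for a norm-closed subspace $W$ of a Banach space $Z$ and $\phi\in Z^{*}$ one has $dist(\phi,W^{\perp})=\|\phi|_{W}\|=\sup\{\,|\phi(w)|:w\in W,\ \|w\|\le 1\,\}$: the inequality ``$\ge$'' follows from $|\phi(w)|=|(\phi-\psi)(w)|\le\|\phi-\psi\|\,\|w\|$ for every $\psi\in W^{\perp}$, and ``$\le$'' from a norm-preserving Hahn--Banach extension of $\phi|_{W}$ back to $Z$. Taking $Z=\LS^1(\B(\HS))$ and $\phi=\rho_x=\Omega(x)$, and using that $\Omega$ is isometric so that $dist(x,\V)=dist(\rho_x,\Omega(\V))$, this gives $dist(x,\V)=\sup\{\,|Tr(ax)|:a\in N_\V\cap\LS^1(\B(\HS)),\ \|a\|_1\le 1\,\}$. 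Passing from $\|a\|_1\le 1$ to $\|a\|_1=1$ is legitimate by homogeneity since $W\neq\{0\}$, which yields the claimed equality in \eqref{I}.

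For the displayed inequality it then suffices to note that for any $\xi,\zeta\in S_\HS$ the rank-one operator $\xi\otimes\zeta$ lies in $\LS^1(\B(\HS))$ with $\|\xi\otimes\zeta\|_1=\|\xi\|\,\|\zeta\|=1$; hence every $\xi\otimes\zeta$ with $\|\xi\|=\|\zeta\|=1$ lying in $N_\V$ is an admissible competitor in the supremum just computed, so that supremum is at least the supremum restricted to such rank-one operators.
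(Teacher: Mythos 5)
Your proof is correct and follows essentially the same route as the paper: identify $\B(\HS)$ with $\LS^1(\B(\HS))^*$ via $\Omega$, show that the annihilator of $W=N_\V\cap\LS^1(\B(\HS))$ inside $\LS^1(\B(\HS))^*$ is exactly $\Omega(\V)$, and deduce the distance formula from a norm-preserving Hahn--Banach extension of $\Omega(x)\big|_{W}$. The only (immaterial) difference is that you establish $W^{\perp}=\Omega(\V)$ via weak*-closedness of the finite-dimensional subspace $\V$ and the bipolar/separation theorem, whereas the paper gets the same identification from the elementary algebraic fact (Lemma \ref{Prelim: Kernel}) that a linear functional vanishing on $\bigcap_{j}\ker\rho_{y_j}$ lies in $span\{\rho_{y_1},\dots,\rho_{y_k}\}$.
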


\noindent Applying Theorem \ref{Thm: Oper.} we get a characterization of best approximation to $x$ out of $\V$.

\begin{thm}\label{Best approx. B(H)}
Let $\HS$ be a Hilbert space and $\V$ be a finite-dimensional subspace of $\B(\HS)$. Let $x\in \B(\HS)\setminus \V$. Then $z\in \V$ is a best approximation to $x$ out of $\V$ if and only if $\Omega(x-z)$ is a Hahn-Banach extension of $\Omega(x)\big|_{N_\V}$.
\end{thm}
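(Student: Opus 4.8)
The plan is to deduce this essentially for free from Theorem~\ref{Thm: Oper.} together with the fact that $\Omega$ is an isometric isomorphism. Throughout, write $\widetilde{N}_\V := N_\V \cap \LS^1(\B(\HS))$, regarded as a (closed) subspace of $\LS^1(\B(\HS))$; since $\Omega(x)$ is a functional on $\LS^1(\B(\HS))$, the symbol $\Omega(x)\big|_{N_\V}$ is to be read as the functional $a \mapsto Tr(ax)$ on $\widetilde{N}_\V$. The first point to record is that $\Omega(x-z)$ is \emph{automatically} a linear extension of this functional for every $z\in\V$: indeed, for $a\in\widetilde{N}_\V$ we have $Tr(az)=0$ by the defining property of $N_\V$, hence
\[
\Omega(x-z)(a) = Tr\big(a(x-z)\big) = Tr(ax) - Tr(az) = Tr(ax) = \Omega(x)(a).
\]
So the only real content of the theorem concerns the \emph{norm} of this extension.

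Next I would compute the two relevant norms. Since restriction of a functional never increases its norm, $\big\|\Omega(x)\big|_{\widetilde{N}_\V}\big\| \leq \|\Omega(x-z)\|$ for every $z\in\V$, and $\Omega(x-z)$ is a Hahn--Banach (norm-preserving) extension of $\Omega(x)\big|_{\widetilde{N}_\V}$ exactly when equality holds here. On the one hand, because $\Omega$ is isometric, $\|\Omega(x-z)\| = \|x-z\|$. On the other hand,
\[
\big\|\Omega(x)\big|_{\widetilde{N}_\V}\big\| = \sup\{\,|Tr(ax)| : a\in N_\V\cap\LS^1(\B(\HS)),\ \|a\|_1=1\,\} = dist(x,\V),
\]
where the last equality is precisely the first line of Theorem~\ref{Thm: Oper.}. (Note this also makes the inequality above read $dist(x,\V)\le\|x-z\|$, as it must.)

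Combining the two displays, $\Omega(x-z)$ is a Hahn--Banach extension of $\Omega(x)\big|_{N_\V}$ if and only if $\|x-z\| = dist(x,\V)$, which is by definition the statement that $z$ is a best approximation to $x$ out of $\V$. I do not expect a genuine obstacle: the whole argument is a short bookkeeping of norms, and all the analytic work — in particular the $\sigma$-WOT compactness of the unit ball of $\B(\HS)$ — has already been expended in establishing Theorem~\ref{Thm: Oper.}. The only place warranting a line of care is making sure the domain of the restricted functional is taken to be $\widetilde{N}_\V = N_\V\cap\LS^1(\B(\HS))$ and that Theorem~\ref{Thm: Oper.} is invoked with exactly that domain.
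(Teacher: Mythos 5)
Your proposal is correct and follows essentially the same route as the paper: both reduce the statement to the identity $\bigl\|\Omega(x)\big|_{N_\V\cap\LS^1(\B(\HS))}\bigr\|=dist(x,\V)$ from Theorem~\ref{Thm: Oper.} together with the isometry $\|\Omega(x-z)\|=\|x-z\|$, after observing that $\Omega(x-z)$ automatically restricts to $\Omega(x)$ on $N_\V\cap\LS^1(\B(\HS))$. Your explicit remark that the restriction must be taken on $N_\V\cap\LS^1(\B(\HS))$ is a welcome clarification of a point the paper leaves implicit.
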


\noindent For trace class operators, we have the following formulation of distance.
\begin{thm}\label{Thm: dist. trace}
Let $\HS$ be a Hilbert space and $\V$ be a finite-dimensional subspace of $\LS^1(\B(\HS))$ with $dim~\V=n$. Let $x\in \LS^1(\B(\HS))\setminus \V$. Then there exist scalars $\lambda_1, \dots, \lambda_k\in [0,1]$ $(1\leq k \leq 2n+1)$ with $\sum_{i=1}^k \lambda_i = 1$ such that
\begin{equation}
\begin{aligned}\label{Dist. Trace}
dist_1(x,\V) = & \sup \{|Tr(ax)|:~a\in \KS(\HS)\cap N_\V,~\|a\|=1\}\\
= & \max \{|Tr(ax)|:~a\in \B(\HS)\cap N_\V,~\|a\|=1\}\\
= & \max \{\sum_{i=1}^k \lambda_kTr(u_kx):~\sum_{i=1}^k 
Im~\lambda_iTr(u_ix)=0,~\sum_{i=1}^k \lambda_iTr(u_iw)=0,~v\in \V\\
& \qquad \quad u_i^*u_i=Id_\HS,~\text{or}~u_iu_i^*=Id_\HS,~\text{for each}~i\in \{1, \dots,k\}\}.
\end{aligned}
\end{equation}
\end{thm}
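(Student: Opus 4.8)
The plan is to obtain the two displayed suprema in \eqref{Dist. Trace} from Banach-space duality together with $\sigma$-WOT compactness, and then to extract the finite convex combination from a maximizer by a Krein--Milman/Carath\'eodory argument adapted to the unit ball of $\B(\HS)$.

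First I would prove the two equalities on the right of \eqref{Dist. Trace}. Since $\Gamma$ in \eqref{dual comp.} identifies $\LS^1(\B(\HS))$ with $\KS(\HS)^{*}$, the pre-annihilator in $\KS(\HS)$ of the finite-dimensional (hence weak-$*$ closed) subspace $\V$ is exactly $\KS(\HS)\cap N_\V$, and by the bipolar theorem its annihilator back in $\KS(\HS)^{*}=\LS^1(\B(\HS))$ is $\V$ itself. Applying the classical identity $\operatorname{dist}(f,Y^{\perp})=\|f|_{Y}\|$ with $Y=\KS(\HS)\cap N_\V$ and $f=\tau_x$, and recalling that $\Gamma$ is isometric, identifies $dist_1(x,\V)$ with $\|\tau_x|_{\KS(\HS)\cap N_\V}\|=\sup\{|Tr(ax)|:a\in\KS(\HS)\cap N_\V,\ \|a\|\le 1\}$; this gives the first line, and since $x\notin\V$ the supremum is positive, so the normalization $\|a\|=1$ is harmless. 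For the second line I would instead use $\Omega$ in \eqref{dual trac.} to identify $\B(\HS)$ with $\LS^1(\B(\HS))^{*}$; the annihilator of $\V$ in $\B(\HS)$ is $N_\V$, so $dist_1(x,\V)=\|x+\V\|_{\LS^1(\B(\HS))/\V}=\sup\{|Tr(ax)|:a\in N_\V,\ \|a\|\le 1\}$. Here $N_\V$ is $\sigma$-WOT closed (being the intersection of the kernels of the $\sigma$-WOT continuous functionals $a\mapsto Tr(ay_j)$ for a basis $\{y_1,\dots,y_n\}$ of $\V$), the closed unit ball of $\B(\HS)$ is $\sigma$-WOT compact, and $a\mapsto Tr(ax)$ is $\sigma$-WOT continuous; hence the supremum is attained, so it is a maximum, and, being positive, it is attained with $\|a\|=1$.

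For the last line I would begin from such a maximizer. Because $N_\V$ is a complex subspace, $dist_1(x,\V)=\max\{Re\,Tr(ax):a\in N_\V,\ \|a\|\le 1\}$, and by Bauer's maximum principle this maximum is attained at an extreme point $a_0$ of the $\sigma$-WOT compact convex set $N_\V\cap\{a\in\B(\HS):\|a\|\le 1\}$; multiplying $a_0$ by a unimodular scalar I may assume $Tr(a_0x)=dist_1(x,\V)\ge 0$, whence $Im\,Tr(a_0x)=0$, and necessarily $\|a_0\|=1$ (otherwise $a_0\pm\varepsilon h$ would lie in the slice for small $\varepsilon$ and any $0\ne h\in N_\V$, contradicting extremality). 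The key point is that $N_\V$ has real codimension $2n$ in $\B(\HS)$; since $a_0$ is extreme in this slice, the real subspace $\{h\in\B(\HS):a_0+th\in\B(\HS),\ \|a_0+th\|\le 1\text{ for all small }|t|\}$ meets $N_\V$ only in $0$ and so has real dimension at most $2n$. Using the structure of faces of the closed unit ball of $\B(\HS)$---each proper closed face has the form $\{a:\|a\|\le 1,\ a|_{\M}=w|_{\M}\}$ for a closed subspace $\M$ and a partial isometry $w$ isometric on $\M$, and is affinely isomorphic to the closed unit ball of $\B(\M^{\perp},(w\M)^{\perp})$, whose extreme points are, by Kadison's theorem, exactly the isometries and co-isometries of $\HS$---finiteness of that tangent subspace forces $a_0$ to lie in a face $F$ of the unit ball with $\dim_{\mathbb R}F\le 2n$. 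Carath\'eodory's theorem inside the finite-dimensional compact convex set $F$ then produces $a_0=\sum_{i=1}^{k}\lambda_iu_i$ with $1\le k\le 2n+1$, $\lambda_i\in[0,1]$, $\sum_i\lambda_i=1$, and each $u_i$ an extreme point of the unit ball, i.e.\ $u_i^{*}u_i=Id_\HS$ or $u_iu_i^{*}=Id_\HS$. Since $a_0\in N_\V$ and $Tr(a_0x)=dist_1(x,\V)$, the resulting data satisfy $\sum_i\lambda_iTr(u_iw)=0$ for all $w\in\V$, $\sum_i\lambda_i\,Im\,Tr(u_ix)=0$, and $\sum_i\lambda_iTr(u_ix)=dist_1(x,\V)$, which is the nontrivial inequality in the last line.

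For the reverse inequality, given any $\lambda_i\in[0,1]$ with $\sum_i\lambda_i=1$ and any $u_i$ with $u_i^{*}u_i=Id_\HS$ or $u_iu_i^{*}=Id_\HS$ subject to $\sum_i\lambda_iTr(u_iw)=0$ for all $w\in\V$ and $\sum_i\lambda_i\,Im\,Tr(u_ix)=0$, the operator $a:=\sum_i\lambda_iu_i$ satisfies $\|a\|\le 1$ and $a\in N_\V$, and $\sum_i\lambda_iTr(u_ix)=Tr(ax)=Re\,Tr(ax)\le|Tr(ax)|\le dist_1(x,\V)$ by the second line; hence the right-hand side of \eqref{Dist. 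Trace} is at most $dist_1(x,\V)$, and with the previous step it equals $dist_1(x,\V)$ and is attained. The main obstacle I anticipate is precisely the transition ``$a_0$ extreme in a finite-codimensional slice of the unit ball $\Rightarrow$ $a_0$ is a convex combination of at most $2n+1$ extreme points of the unit ball'': it rests on the precise face structure of the operator ball and, in particular, on excluding the case where $a_0$ attains its norm only asymptotically---a case ruled out exactly by the finite-dimensionality of the tangent subspace coming from the finite codimension of $N_\V$.
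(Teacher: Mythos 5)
Your derivation of the first two lines of \eqref{Dist. Trace} is sound and essentially the paper's: the first line is the standard pre-annihilator duality through $\Gamma$ in \eqref{dual comp.} (this is Proposition \ref{sim. trace.}), and for the second line your direct argument --- $N_\V$ is $\sigma$-WOT closed, the ball of $\B(\HS)$ is $\sigma$-WOT compact, $a\mapsto Tr(ax)$ is $\sigma$-WOT continuous, so the supremum over $N_\V\cap B_{\B(\HS)}$ is attained --- is a clean substitute for the paper's route, which instead deduces attainment inside $N_\V$ from Theorem \ref{Thm: Trace. Oper.} (nonemptiness of $M_{x-v_0}\cap N_\V$ at a best approximant $v_0$). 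Where you genuinely diverge is the third line: the paper obtains the convex combination of at most $2n+1$ isometries/co-isometries in one stroke by quoting Singer's finite-dimensional best-approximation theorem (Theorem \ref{Singer}) together with Kadison's description of $Ext(B_{\LS^1(\B(\HS))^*})$ (Proposition \ref{extr. dual. trace.}), whereas you re-derive the needed Carath\'eodory decomposition from scratch via Bauer's maximum principle and the face structure of the operator ball. That is a legitimate alternative, but it trades a citation for an extremal-structure argument whose one delicate step you assert rather than prove.

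That step is the claim that the face $F$ of $B_{\B(\HS)}$ generated by your extreme point $a_0$ of the slice $N_\V\cap B_{\B(\HS)}$ is a \emph{compact} convex set of real dimension at most $2n$; you write ``Carath\'eodory's theorem inside the finite-dimensional compact convex set $F$'' without establishing compactness, and your closing remark about ``excluding the case where $a_0$ attains its norm only asymptotically'' shows you are aware the issue is unresolved. The finite-dimensionality itself is fine: for every $y\in F$ there is $z\in F$ with $a_0$ in the open segment $(y,z)$, so $F-a_0$ lies in your tangent subspace $T$, which meets $N_\V$ trivially and hence has real dimension at most $2n$. What closes the gap is not the Akemann--Pedersen-type description of closed faces (which you invoke but do not actually need, and which in any case describes the weak$^*$-closed faces, not the face generated by a point): it is the elementary fact that a finite-dimensional face of a closed convex set is itself closed (take $x\in\overline{F}\subseteq B_{\B(\HS)}$, $y$ in the relative interior of $F$ inside its finite-dimensional affine hull; then $y+t(y-x)\in F$ for small $t>0$, so $y$ lies in an open segment joining $x$ to a point of $F$, forcing $x\in F$). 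With $F$ compact and of dimension at most $2n$, Minkowski--Carath\'eodory gives $a_0=\sum_{i=1}^{k}\lambda_iu_i$ with $k\le 2n+1$ and each $u_i\in Ext(F)\subseteq Ext(B_{\B(\HS)})$, and Kadison's theorem identifies the $u_i$ as isometries or co-isometries; the rest of your argument (the constraints inherited from $a_0\in N_\V$ and $Tr(a_0x)=dist_1(x,\V)$, and the reverse inequality for arbitrary admissible tuples) is correct and matches the paper's treatment of the set $\Lambda$. So the proposal is salvageable as stated once this closedness lemma is inserted, but as written the decisive step of the third equality is a gap, and the worry about asymptotic norm attainment is a red herring once one works with the face generated by $a_0$ rather than the smallest weak$^*$-closed face containing it.
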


\noindent The $C^*$-homomorphism $Id\otimes \mathbf{1}:\B(\HS)\to \B(\HS\overline{\otimes}\ell_2(\mathbb{N}))$, defined as $id\otimes \mathbf{1}(x)=x\otimes \mathbf{1}$ isometrically embeds $\B(\HS)$ inside $\B(\HS\overline{\otimes}\ell_2(\mathbb{N}))$. The pull-back of weak operator topology on $\B(\HS\overline{\otimes}\ell_2(\mathbb{N}))$ under the map $Id\otimes \mathbf{1}$ is a locally convex topology, known as the $\sigma$-WOT topology on $\B(\HS)$. In particular, the $\sigma$-WOT continuous functionals are described by the following lemma.

\begin{lem}[\cite{Tak}, Chapter II]\label{Prelim: Trace class}
Let $\varphi:\B(\HS)\to \mathbb{C}$ be a linear functional. Then the following conditions are equivalent:
\begin{itemize}
    \item[(i)] There exists $a\in \LS^1(\B(\HS))$ such that 
\[
\varphi(x)=\varphi_a(x)=Tr(ax), \quad x\in \B(\HS).
\]
    \item[(ii)] $\varphi$ is $\sigma$-WOT continuous.
\end{itemize}
\end{lem}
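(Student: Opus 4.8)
The plan is to prove the two implications separately, exploiting the fact that the $\sigma$-WOT on $\B(\HS)$ is, by construction, the topology induced on $\B(\HS)\otimes\mathbf{1}$ as a subspace of $\B(\HS\overline{\otimes}\ell_2(\mathbb{N}))$ equipped with the weak operator topology, together with the classical description of the WOT-continuous functionals on $\B(\HS\overline{\otimes}\ell_2(\mathbb{N}))$ as finite sums of vector functionals $y\mapsto\langle y\,\Xi,\Psi\rangle$. For the implication (i) $\Rightarrow$ (ii), given $a\in\LS^1(\B(\HS))$ I would use its Schmidt decomposition $a=\sum_n s_n\,(e_n\otimes f_n)$, with $s_n\geq 0$, $\sum_n s_n=\|a\|_1<\infty$ and $(e_n),(f_n)$ orthonormal systems, so that $\varphi_a(x)=Tr(ax)=\sum_n s_n\langle xe_n,f_n\rangle$. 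Writing $(\delta_n)$ for the standard orthonormal basis of $\ell_2(\mathbb{N})$ and setting $\Xi:=\sum_n\sqrt{s_n}\,e_n\otimes\delta_n$ and $\Psi:=\sum_n\sqrt{s_n}\,f_n\otimes\delta_n$ in $\HS\overline{\otimes}\ell_2(\mathbb{N})$ (both of finite norm $\sqrt{\|a\|_1}$), a direct computation gives $\langle(x\otimes\mathbf{1})\Xi,\Psi\rangle=\varphi_a(x)$ for every $x\in\B(\HS)$; since $y\mapsto\langle y\,\Xi,\Psi\rangle$ is WOT-continuous on $\B(\HS\overline{\otimes}\ell_2(\mathbb{N}))$ and the $\sigma$-WOT is the pull-back of the WOT under $Id\otimes\mathbf{1}$, the functional $\varphi_a$ is $\sigma$-WOT continuous.

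For the implication (ii) $\Rightarrow$ (i), I would first observe that, $Id\otimes\mathbf{1}$ being an injective linear map, $\varphi$ transports to a WOT-continuous linear functional on the subspace $\B(\HS)\otimes\mathbf{1}$ of $\B(\HS\overline{\otimes}\ell_2(\mathbb{N}))$, which by the Hahn--Banach theorem for locally convex spaces extends to a WOT-continuous linear functional $\psi$ on all of $\B(\HS\overline{\otimes}\ell_2(\mathbb{N}))$. Next I would invoke (or quickly reprove) the standard fact that such a $\psi$ has the form $y\mapsto\sum_{i=1}^m\langle y\,\Xi_i,\Psi_i\rangle$ for finitely many $\Xi_i,\Psi_i\in\HS\overline{\otimes}\ell_2(\mathbb{N})$: WOT-continuity forces $|\psi|$ to be bounded on a basic neighbourhood $\{y:|\langle yu_i,v_i\rangle|<\varepsilon,\ 1\leq i\leq m\}$, whence $\psi$ annihilates the common kernel of the maps $y\mapsto\langle yu_i,v_i\rangle$ and is therefore a linear combination of them. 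Expanding $\Xi_i=\sum_n\xi_{i,n}\otimes\delta_n$ and $\Psi_i=\sum_n\psi_{i,n}\otimes\delta_n$ then yields
\[
\varphi(x)=\psi(x\otimes\mathbf{1})=\sum_{i=1}^m\sum_n\langle x\,\xi_{i,n},\psi_{i,n}\rangle,\qquad x\in\B(\HS),
\]
and $a:=\sum_{i=1}^m\sum_n \xi_{i,n}\otimes\psi_{i,n}$ converges in trace norm, by Cauchy--Schwarz and $\sum_n\|\xi_{i,n}\|^2<\infty$, $\sum_n\|\psi_{i,n}\|^2<\infty$, to an element of $\LS^1(\B(\HS))$ with $\|a\|_1\leq\sum_{i=1}^m\big(\sum_n\|\xi_{i,n}\|^2\big)^{1/2}\big(\sum_n\|\psi_{i,n}\|^2\big)^{1/2}<\infty$ and $Tr(ax)=\varphi(x)$ for all $x$.

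The delicate point is the implication (ii) $\Rightarrow$ (i): one must be sure that the Hahn--Banach extension genuinely remains WOT-continuous — which rests on the fact that a functional continuous on a subspace of a locally convex space is, on that subspace, dominated by the restriction of a continuous seminorm of the ambient space — and that the interchange of the finite sum over $i$ with the infinite sums over $n$, along with the identification of $a$ as a bona fide trace class operator, is legitimate by the absolute convergence of the double series. By contrast, the implication (i) $\Rightarrow$ (ii) is essentially bookkeeping once the Schmidt decomposition is written down.
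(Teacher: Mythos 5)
Your argument is correct. Note, though, that the paper offers no proof of this lemma at all --- it is imported verbatim from Takesaki, Chapter II --- so there is nothing internal to compare against; what you have written is essentially the standard textbook proof, and both directions check out: the Schmidt decomposition $a=\sum_n s_n\,(e_n\otimes f_n)$ with $\Xi=\sum_n\sqrt{s_n}\,e_n\otimes\delta_n$, $\Psi=\sum_n\sqrt{s_n}\,f_n\otimes\delta_n$ does give $\langle(x\otimes\mathbf{1})\Xi,\Psi\rangle=\sum_n s_n\langle xe_n,f_n\rangle=Tr(ax)$, and in the converse direction the Cauchy--Schwarz estimate $\sum_n\|\xi_{i,n}\|\,\|\psi_{i,n}\|\le\|\Xi_i\|\,\|\Psi_i\|$ legitimizes both the trace-norm convergence of $a$ and the interchange of sums, since $b\mapsto Tr(bx)$ is $\|\cdot\|_1$-continuous. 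One small streamlining worth pointing out: the Hahn--Banach extension step (your self-identified delicate point) can be bypassed entirely. Since the $\sigma$-WOT is the initial topology under the single map $Id\otimes\mathbf{1}$, a $\sigma$-WOT continuous $\varphi$ is already bounded on the preimage of a basic WOT-neighbourhood $\{y:|\langle y\,\Xi_i,\Psi_i\rangle|<\varepsilon,\ 1\le i\le m\}$, hence vanishes on $\bigcap_i\ker\bigl(x\mapsto\langle(x\otimes\mathbf{1})\Xi_i,\Psi_i\rangle\bigr)$ and is a linear combination of these functionals by exactly the kernel lemma the paper records as Lemma \ref{Prelim: Kernel}; this lands you at the same double sum without ever leaving $\B(\HS)$. (Also, you silently use the convention $(\xi\otimes\zeta)\eta=\langle\eta,\zeta\rangle\xi$, which is the intended reading of the paper's misprinted definition $\eta\mapsto\langle\eta,\zeta\rangle\zeta$; your computations are consistent with it.)
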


\noindent Thus, $\sigma$-WOT continuous functionals are nothing but the weak$^*$ continuous functional on $\B(\HS)$. Since $\B(\HS)$ is a von-Neumann algebra, its closed unit ball is $\sigma$-WOT compact. Therefore, for any $x\in \LS^1(\B(\HS))$, the following collection
\[
M_x := \{a\in \B(\HS):~\|a\|=1,~\pi(x)(a)=\|x\|_1\}
\]
is always non-empty, where $\pi:\LS^1(\B(\HS))\to \LS^1(\B(\HS))^{**}$ denotes the canonical embedding. This observation leads to the following characterization of best approximations in trace class norm.

\begin{thm}\label{Thm: Trace. Oper.}
Let $\HS$ be a Hilbert space and $\V$ be any finite-dimensional subspace of $\LS^1(\B(\HS))$. Let $x\in \LS^1(\B(\HS))\setminus \V$. Then $y_0\in \V$ is a best approximation to $x$ out of $\V$ if and only if $M_{x-y_0}\cap N_\V\neq \emptyset$. 
\end{thm}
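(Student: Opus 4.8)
The plan is to transport the classical duality characterization of best approximation (\cite{Singer}) through the identification $\LS^1(\B(\HS))^* \cong \B(\HS)$ afforded by $\Omega$ in \eqref{dual trac.}. Under this identification the (pre)annihilator of $\V$ is exactly $N_\V$, and the norm-one functionals that attain their norm at $x - y_0$ are exactly the operators in $M_{x-y_0}$, since for $a \in \B(\HS)$ we have $\pi(x-y_0)(a) = \rho_a(x-y_0) = Tr((x-y_0)a)$. Thus the asserted equivalence is the duality principle read off through $\Omega$; I would nonetheless present both implications directly.

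For sufficiency, suppose $a \in M_{x-y_0} \cap N_\V$. For any $y \in \V$, since $y - y_0 \in \V$ we get $Tr((y-y_0)a) = 0$, hence
\[
\|x-y_0\|_1 = Tr((x-y_0)a) = Tr((x-y)a) + Tr((y-y_0)a) = Tr((x-y)a) \le \|x-y\|_1\,\|a\| = \|x-y\|_1 ,
\]
using $|Tr(ba)| \le \|b\|_1\|a\|$ for $b \in \LS^1(\B(\HS))$, $a \in \B(\HS)$, together with $\|a\| = 1$. As $y \in \V$ was arbitrary, $y_0$ is a best approximation to $x$ out of $\V$.

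For necessity, put $z := x - y_0$, which is nonzero and satisfies $\V \cap \mathbb{C}z = \{\mathbf{0}\}$ because $x \notin \V$. On the subspace $\V + \mathbb{C}z = \V + \mathbb{C}x$ of $\LS^1(\B(\HS))$ define the linear functional $g(v + \lambda z) := \lambda\|z\|_1$ for $v \in \V$, $\lambda \in \mathbb{C}$. For $\lambda \ne 0$ one has $\|v + \lambda z\|_1 = |\lambda|\,\|x - (y_0 - \lambda^{-1}v)\|_1 \ge |\lambda|\, dist_1(x,\V) = |\lambda|\,\|z\|_1 = |g(v+\lambda z)|$, and the estimate is trivial when $\lambda = 0$; hence $\|g\| \le 1$, while $g(z) = \|z\|_1$ forces $\|g\| = 1$. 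Extend $g$ by the complex Hahn-Banach theorem to $G \in \LS^1(\B(\HS))^*$ with $\|G\| = 1$, and use Lemma \ref{Prelim: Trace class} (equivalently $\Omega^{-1}$) to write $G = \rho_a$ for some $a \in \B(\HS)$ with $\|a\| = 1$. Then $Tr(ya) = G(y) = g(y) = 0$ for every $y \in \V$, so $a \in N_\V$, and $\pi(z)(a) = Tr(za) = G(z) = g(z) = \|z\|_1 = \|x-y_0\|_1$, so $a \in M_{x-y_0}$; therefore $M_{x-y_0} \cap N_\V \ne \emptyset$.

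There is no serious obstacle: this is the standard one-dimensional-extension-plus-Hahn-Banach argument, and the only points requiring care are bookkeeping the identifications so that the extension genuinely lands in $\B(\HS)$ (guaranteed by Lemma \ref{Prelim: Trace class}) and that $N_\V$ is precisely the annihilator of $\V$ under $\Omega$ — both immediate. Note also that the non-emptiness of $M_{x-y_0}$ is not needed for the equivalence, since the necessity direction produces an explicit element of $M_{x-y_0} \cap N_\V$; it holds in any case by the $\sigma$-WOT (= weak$^*$) compactness of the closed unit ball of $\B(\HS)$ recorded above.
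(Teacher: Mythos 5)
Your proof is correct, and its sufficiency half coincides with the paper's. The necessity half, however, takes a genuinely different route. The paper argues by contraposition: assuming $M_{x-y_0}\cap N_\V=\emptyset$, it separates the $\sigma$-WOT compact convex face $M_{x-y_0}$ from the $\sigma$-WOT closed subspace $N_\V$ by a $\sigma$-WOT continuous functional, identifies that functional with an element $v\in\V$ via Lemma \ref{Prelim: Kernel}, and then invokes James' characterization of Birkhoff--James orthogonality to conclude that $y_0$ is not a best approximation. You instead run the classical codimension-one Hahn--Banach construction directly in $\LS^1(\B(\HS))$: the functional $g(v+\lambda z)=\lambda\|z\|_1$ on $\V+\mathbb{C}z$ has norm one precisely because $\|x-y_0\|_1=dist_1(x,\V)$, and its norm-preserving extension, read back through $\Omega^{-1}$, is an explicit element of $M_{x-y_0}\cap N_\V$. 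Your argument is more elementary (no $\sigma$-WOT compactness, no separation theorem, no James lemma) and, as you note, it neither uses nor needs the a priori non-emptiness of $M_{x-y_0}$; it also nowhere uses the finite-dimensionality of $\V$, so it establishes the equivalence for an arbitrary subspace of $\LS^1(\B(\HS))$, whereas the paper's separation argument relies on Lemma \ref{Prelim: Kernel} and hence on $\dim\V<\infty$. What the paper's approach buys in exchange is the geometric picture (the $\sigma$-WOT compact face $M_{x-y_0}$ and its separation from $N_\V$) that is reused in the proof of Theorem \ref{Thm: dist. trace}. One cosmetic point: the identification $G=\rho_a$ of a bounded functional on $\LS^1(\B(\HS))$ with an element of $\B(\HS)$ is the surjectivity of $\Omega$ in (\ref{dual trac.}), not Lemma \ref{Prelim: Trace class}, which goes in the other direction (it describes functionals on $\B(\HS)$); your parenthetical ``equivalently $\Omega^{-1}$'' is the correct citation.
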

\noindent The above result has an important consequence for smooth trace class operators. Finally, we consider the finite-dimensional $C^*$-algebra case to formulate the distance of a point from a subspace in the $C^*$-norm and in the trace norm.
\begin{thm}\label{C-star case}
Let $n_1,n_2, \dots n_p$ be natural numbers and $\M$ be the finite-dimensional $C^*$-algebra
\[
M_{n_1}(\mathbb{C})\oplus_\infty M_{n_2}(\mathbb{C}) \oplus_\infty \dots \oplus_\infty M_{n_p}(\mathbb{C}).
\]
Let $\V$ be a subspace of $\M$ with $dim~\V=k$. Let $\x=(x_1,x_2,\dots,x_p)\in \M\setminus \V$. Then
\[
dist(\x,\V)=\max\left\{|\sum_{i=1}^p Tr(x_ia_i)|:~\sum_{i=1}^pTr(a_iy^{(j)}_i)=0,~\sum_{i=1}^p \|a_i\|_1=1,~1\leq j\leq k\right\},
\]
and
\[
dist_1(\x,\V)=\max\left\{|\sum_{i=1}^p Tr(x_ia_i)|:~\sum_{i=1}^pTr(a_iy^{(j)}_i)=0,~\max\{\|a_i\|:~1\leq i\leq p\}=1,~1\leq j\leq k\right\},
\]
for any basis $\{\y_j\in \M:\y_j=(y^{(j)}_1,y^{(j)}_2,\dots,y^{(j)}_p),1\leq j \leq k\}$ of $\V$.
\end{thm}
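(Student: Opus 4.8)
The plan is to derive both identities from the classical duality description of the distance to a closed subspace: in a Banach space $X$ with closed subspace $Y$ and $x\in X$, one has $dist(x,Y)=\sup\{|f(x)|:f\in Y^{\perp},\ \|f\|\le 1\}$ (see \cite{Singer}), and in a reflexive space this supremum is attained. Since $\M$ is finite-dimensional, $\V$ is automatically closed and the dual unit ball is compact, so the supremum is a maximum; moreover, since $\x\notin\V$ we have $dist(\x,\V)>0$, and a routine scaling argument then shows the maximum is already attained on $\{f\in\V^{\perp}:\|f\|=1\}$. So the task reduces to identifying, for each of the two norms in question on $\M$, the dual space, its norm, and the annihilator $\V^{\perp}$.

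First I would record the trace duality for a single block: the pairing $(x,a)\mapsto Tr(xa)$ furnishes isometric isomorphisms $(M_n(\mathbb{C}),\|\cdot\|)^*\cong(M_n(\mathbb{C}),\|\cdot\|_1)$ and $(M_n(\mathbb{C}),\|\cdot\|_1)^*\cong(M_n(\mathbb{C}),\|\cdot\|)$. Forming the $\ell^\infty$- and $\ell^1$-direct sums of these, one gets: if $\M$ carries its $C^*$-norm $\|\x\|=\max_i\|x_i\|$ then $\M^*\cong M_{n_1}(\mathbb{C})\oplus_1\cdots\oplus_1 M_{n_p}(\mathbb{C})$ with norm $(a_i)_i\mapsto\sum_i\|a_i\|_1$, while if $\M$ carries the trace norm $\|\x\|_1=\sum_i\|x_i\|_1$ then $\M^*\cong\M$ with its $C^*$-norm $(a_i)_i\mapsto\max_i\|a_i\|$; in both cases the pairing is $\langle\x,(a_i)_i\rangle=\sum_i Tr(x_ia_i)$. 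Because this pairing is bilinear and $\V=\mathrm{span}\{\y_1,\dots,\y_k\}$, the functional corresponding to a tuple $(a_1,\dots,a_p)$ belongs to $\V^{\perp}$ if and only if $\sum_i Tr(a_iy^{(j)}_i)=0$ for $1\le j\le k$.

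Inserting the first identification into the duality formula expresses $dist(\x,\V)$ as the maximum of $|\sum_i Tr(x_ia_i)|$ over tuples $(a_i)_i$ with $\sum_i Tr(a_iy^{(j)}_i)=0$ for $1\le j\le k$ and $\sum_i\|a_i\|_1=1$, which is the first claimed formula; inserting the second identification replaces the normalization by $\max_i\|a_i\|=1$ and yields the second. The argument is essentially routine; the only point that needs a careful (but standard) verification is the computation of the dual norms, i.e.\ that the norm dual to the $\ell^\infty$-direct sum of operator-norm matrix blocks is the $\ell^1$-sum of trace norms and vice versa, and I do not expect this to pose any real obstacle. As an alternative, one could realize $\M$ as a unital $C^*$-subalgebra of $\B(\HS)$ for a suitable finite-dimensional $\HS$ and invoke the machinery behind Theorems~\ref{Thm: Oper.} and~\ref{Thm: dist. trace}, but the direct route above is cleaner and sidesteps the $\sigma$-WOT considerations, which are unnecessary in finite dimensions.
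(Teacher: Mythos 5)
Your proposal is correct and follows essentially the same route as the paper: both are instances of the classical duality formula $dist(x,Y)=\sup\{|f(x)|:f\in Y^{\perp},\ \|f\|\leq 1\}$ combined with the blockwise trace-duality identifications $(\oplus_\infty M_{n_i})^*\cong\oplus_1 (M_{n_i},\|\cdot\|_1)$ and its reverse, the only cosmetic difference being that the paper re-derives the duality formula in situ via a Hahn--Banach extension of $\psi_{\x}$ restricted to $\bigcap_{\mathbf{v}\in\V}\ker\psi_{\mathbf{v}}$ (plus Lemma \ref{Prelim: Kernel}), whereas you invoke it as a known result and treat the two norms symmetrically.
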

\noindent The applications of the above results are illustrated through suitable examples.

\medskip

\noindent The following two results will be used in the sequel. For a normed linear space $X$, we denote the unit sphere of $X$ by $S_X$ and the continuous dual of $X$ by $X^*$.

\begin{thm}[\cite{Singer} Theorem 1.1, Page 170]\label{Singer}
Let $X$ be a complex normed linear space and $G$ be an $n$-dimensional subspace of $X$. Let $x\in X\setminus G$ and $g_0\in G$. Then the following conditions are equivalent:
\begin{itemize}
    \item[(i)] $g_0$ is a best approximation to $x$ out of $G$.
    \item[(ii)] There exist $k$-number of extremal points $f_1,\dots,f_k\in S_{X^*}$ $(1\leq k \leq 2n+1)$ and scalars $\lambda_1,\dots, \lambda_k\in [0,1]$ with $\sum_{i=1}^k \lambda_i = 1$ such that
    \[
    \sum_{i=1}^k \lambda_if_i(x-g_0) = \|x-g_0\| \quad \text{and} \quad  \sum_{i=1}^k \lambda_if_i(g) = 0, \quad g\in G.
    \]
\end{itemize}
\end{thm}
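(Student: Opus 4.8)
The plan is to prove the two implications separately, using only Hahn--Banach, Krein--Milman and Carath\'eodory's theorem as external inputs. The implication (ii) $\Rightarrow$ (i) is the routine one: assuming extreme points $f_1,\dots,f_k \in S_{X^*}$ and convex weights $\lambda_i$ as in (ii), for an arbitrary $g \in G$ I would use $g - g_0 \in G$ to write
\[
\|x - g_0\| = \sum_{i=1}^k \lambda_i f_i(x - g_0) = \sum_{i=1}^k \lambda_i f_i(x - g) + \sum_{i=1}^k \lambda_i f_i(g - g_0),
\]
where the last sum vanishes by the annihilation condition. Since the left-hand side is real, taking real parts and using $|f_i(x - g)| \le \|x - g\|$ yields $\|x - g_0\| \le \|x - g\|$, so $g_0$ is a best approximation.

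For the converse (i) $\Rightarrow$ (ii), set $y := x - g_0$ and $d := \|y\| = dist(x,G)$; since $g_0$ is a best approximation, $dist(y,G) = d$. Passing to the quotient $X/G$ and applying Hahn--Banach, I would first produce $f_0 \in S_{X^*}$ with $f_0|_G = 0$ and $f_0(y) = d$ (a unimodular rotation makes the attained value real and positive). Fixing a basis $g_1,\dots,g_n$ of $G$, I then consider the real-linear, weak$^*$-continuous map $T : X^* \to \mathbb{R}^{2n+1}$,
\[
T(f) = \big(\mathrm{Re}\, f(g_1), \mathrm{Im}\, f(g_1), \dots, \mathrm{Re}\, f(g_n), \mathrm{Im}\, f(g_n), \mathrm{Re}\, f(y)\big).
\]
Since $B_{X^*}$ is weak$^*$-compact and convex, $C := T(B_{X^*})$ is a compact convex subset of $\mathbb{R}^{2n+1}$ containing $p_0 := T(f_0) = (0,\dots,0,d)$. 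Because the last coordinate $\mathrm{Re}\, f(y)$ is maximised over $B_{X^*}$ precisely at the value $d = \|y\|$, the point $p_0$ lies in the top face $C \cap \{t_{2n+1} = d\}$, a compact convex set contained in a $2n$-dimensional affine hyperplane.

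Applying Carath\'eodory's theorem inside this hyperplane, I would write $p_0 = \sum_{i=1}^k \lambda_i q_i$ with $1 \le k \le 2n+1$, where each $q_i$ is an extreme point of the top face and the $\lambda_i$ are convex weights. An extreme point of a face of $C$ is an extreme point of $C$, and for such a $q_i$ the fibre $T^{-1}(q_i) \cap B_{X^*}$ is a nonempty weak$^*$-compact face of $B_{X^*}$, whose extreme points (which exist by Krein--Milman) are extreme points of $B_{X^*}$; choosing one such $f_i$ gives $T(f_i) = q_i$ with $f_i$ extreme in $B_{X^*}$. Since $q_i$ lies in the top face, $\mathrm{Re}\, f_i(y) = d = \|y\|$ forces $\|f_i\| = 1$ and in fact $f_i(y) = d$, so $f_i \in S_{X^*}$. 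Reading off the coordinates of $\sum_i \lambda_i q_i = p_0$ then gives $\sum_i \lambda_i f_i(g_j) = 0$ for every $j$, hence $\sum_i \lambda_i f_i(g) = 0$ for all $g \in G$, while $f_i(y) = d$ for each $i$ yields $\sum_i \lambda_i f_i(x - g_0) = d = \|x - g_0\|$.

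The main obstacle is arranging the counting so that the bound is exactly $2n+1$ rather than $2n+3$. A naive approach would treat $\sum_i \lambda_i f_i(y) = d$ as two real constraints (real part $d$, imaginary part $0$) on top of the $2n$ real annihilation constraints. The crux of the argument is the observation that, once $f_i$ attains $\mathrm{Re}\, f_i(y) = \|y\|$, norm attainment automatically forces $f_i(y) = \|y\|$; the imaginary-part condition is therefore free, the effective constraint count drops to $2n$, and Carath\'eodory delivers the sharp bound $k \le 2n+1$. Care is likewise needed in the lifting step to guarantee that the chosen $f_i$ are genuine extreme points of $B_{X^*}$ and not merely of the image $C$, which is where the fact that extreme points of a face are extreme in the ambient convex set is invoked twice, first for $C$ and then for $B_{X^*}$.
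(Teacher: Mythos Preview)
The paper does not contain a proof of this statement; it is quoted from Singer's monograph as a preliminary tool and invoked later as a black box in the proof of Theorem~\ref{Thm: dist. trace}. There is therefore no proof in the paper to compare your argument against.

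That said, your argument is correct and follows the classical route. The implication (ii)$\Rightarrow$(i) is the easy direction and your computation is fine. For (i)$\Rightarrow$(ii), the combination of a real-linear evaluation map $T$ into $\mathbb{R}^{2n+1}$, restriction to the maximising face $\{t_{2n+1}=d\}$, Minkowski--Carath\'eodory in that $2n$-dimensional hyperplane, and lifting extreme points back to $B_{X^*}$ via the observation that $T^{-1}(q_i)\cap B_{X^*}$ is a weak$^*$-compact face (so Krein--Milman applies and its extreme points are extreme in $B_{X^*}$) is exactly the standard machinery. Your identification of the crux --- that $\mathrm{Re}\,f_i(y)=\|y\|$ together with $\|f_i\|\le 1$ automatically forces $f_i(y)\in\mathbb{R}$, so the imaginary-part constraint on $f(y)$ need not appear as a coordinate of $T$ --- is precisely what brings the bound down to $2n+1$ in the complex case. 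One cosmetic remark: in your closing paragraph the ``naive'' bound should be $2n+2$ (Carath\'eodory in $\mathbb{R}^{2n+1}$) rather than $2n+3$; the further drop to $2n+1$ comes from working inside the top face, which you do correctly in the body of the argument.
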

\noindent The above result is due to Singer and has been applied to study the best approximation problems in the space of compact operators on a Banach space in \cite{MP}.

\begin{lem}[\cite{Rudin}, Lemma 3.9]\label{Prelim: Kernel}
Let $X$ be a vector space and $f$, $f_1,f_2,\dots, f_k$ are linear functionals on $X$. Then the following conditions are equivalent:
\begin{itemize}
    \item[(a)] $\{x\in X:~f_j(x)=0,~1\leq j\leq k\}\subset \ker~f$.
    \item[(b)] $f\in span\{f_1,f_2,\dots,f_k\}$. 
\end{itemize}
\end{lem}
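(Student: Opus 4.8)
The plan is to establish the two implications separately, with the forward direction (b) $\Rightarrow$ (a) being immediate and the reverse direction (a) $\Rightarrow$ (b) carrying the content. For (b) $\Rightarrow$ (a), suppose $f = \sum_{j=1}^k c_j f_j$ for some scalars $c_1, \dots, c_k$. If $x \in X$ satisfies $f_j(x) = 0$ for every $j$, then $f(x) = \sum_{j=1}^k c_j f_j(x) = 0$, so $x \in \ker f$; this yields the desired inclusion at once.

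For the substantive direction (a) $\Rightarrow$ (b), I would reduce the problem to finite-dimensional linear algebra by packaging the functionals $f_1, \dots, f_k$ into a single linear map. Define $\Phi : X \to \mathbb{C}^k$ by $\Phi(x) = (f_1(x), \dots, f_k(x))$. The hypothesis (a) is precisely the assertion that $\ker \Phi \subseteq \ker f$. The key observation is that this kernel inclusion forces $f$ to factor through $\Phi$: if $\Phi(x) = \Phi(x')$, then $\Phi(x - x') = \mathbf{0}$, so $x - x' \in \ker \Phi \subseteq \ker f$ and hence $f(x) = f(x')$. Consequently $f$ is constant on the fibers of $\Phi$, which lets me define $g : \Phi(X) \to \mathbb{C}$ on the image of $\Phi$ by $g(\Phi(x)) := f(x)$; this is well-defined by the fiber condition just verified and linear because both $f$ and $\Phi$ are linear.

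It then remains to extend $g$ from the subspace $\Phi(X)$ to all of $\mathbb{C}^k$. Since $\mathbb{C}^k$ is finite-dimensional, I would simply choose a linear complement of $\Phi(X)$ and extend $g$ by setting it equal to $0$ there, so that no Hahn--Banach machinery is required, only the existence of a basis extension. The resulting linear functional on $\mathbb{C}^k$ is represented by a coefficient vector, say $g(z_1, \dots, z_k) = \sum_{j=1}^k c_j z_j$. Substituting $z = \Phi(x)$ gives $f(x) = g(\Phi(x)) = \sum_{j=1}^k c_j f_j(x)$ for all $x \in X$, which is exactly the statement $f = \sum_{j=1}^k c_j f_j \in \operatorname{span}\{f_1, \dots, f_k\}$.

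The only point requiring any care---and the closest thing to an obstacle---is verifying that $g$ is well-defined on $\Phi(X)$, and this is precisely where hypothesis (a) is used in an essential way; everything afterward is routine finite-dimensional linear algebra. Note that no topological or completeness assumptions on $X$ enter the argument, so the lemma applies verbatim to its later uses in the sequel.
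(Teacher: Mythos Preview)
Your proof is correct. Note, however, that the paper does not supply its own proof of this lemma: it is simply quoted from Rudin's \emph{Functional Analysis} (Lemma~3.9) and used as a black box in the proofs of Theorems~\ref{Thm: Oper.}, \ref{Thm: Trace. Oper.}, and \ref{C-star case}. Your argument---packaging $f_1,\dots,f_k$ into a single map $\Phi:X\to\mathbb{C}^k$, using the kernel inclusion to factor $f$ through $\Phi(X)$, and then extending linearly to all of $\mathbb{C}^k$---is in fact precisely Rudin's proof, so there is nothing to compare.
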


\section{Distance and Best approximations in $\B(\HS)$}

\noindent We begin with an example which shows $dist(x,\V)\neq \Delta(x)$ for $x\in \B(\HS)\setminus \KS(\HS)$ and a proper subspace $\V$ of $\KS(\HS)$.

\begin{eg}\label{Holmes Example}
Let $\HS=\ell_2 (\mathbb{N})$, the Hilbert space of all square summable sequences. Let $(\zeta_n)_{n\geq 0}$ be the standard orthonormal basis of $\HS$. Consider the subspace $\HS_0=span\{\xi_0, \xi_1, \xi_2\}$. Let $x=2~Id_{\HS_0}\oplus u$ for an isometry $u$ on $\HS_0^\perp$. Consider the finite rank operators $y=a\oplus \mathbf{0}$ and $z=b\oplus \mathbf{0}$ with respect to the decomposition $\HS_0\oplus \HS_0^\perp$, where
\[
a = \begin{bmatrix}
1 & \frac{1}{2} & 0\\
0 & \frac{1}{2} & 1\\
1 & 0           & 0
\end{bmatrix}
\quad \text{and} \quad
b = \begin{bmatrix}
0           & 1           & -1\\
0           & \frac{1}{2} & 0\\
\frac{1}{3} & \frac{1}{4} & 0
\end{bmatrix}.
\]
Evidently, $y,z\in \KS(\HS)$, however, $x\notin \KS(\HS)$, as it contains the unilateral shift.

\medskip

\noindent Consider the $2$-dimensional subspace $\V=span\{y,z\}$. Observe that $Tr(y(\zeta_2\otimes \zeta_2))=Tr(z(\zeta_2\otimes \zeta_2))=0$ and $Tr(x(\zeta_2\otimes \zeta_2))=2$. Granted Theorem \ref{Thm: Oper.}, we have $dist(x,\V)\geq 2$. However, $dist(x,\V) = 2$, since $\|x\|=2$. Thus, $\mathbf{0}$ is the best approximation to $x$ out of $\V$.

\medskip

\noindent We now compute $\Delta(x)$. Let $(\xi_n)$ be any sequence of unit vectors in $\HS$ that weakly converges to $\mathbf{0}$. Let $\xi_n=\nu_n+\eta_n$, where $\nu_n\in \HS_0$ and $\eta_n\in \HS_0^\perp$ ($n\in \mathbb{N}$). For any $\phi\in \HS_0^\perp$, we have
\[
\lim_n\langle \xi_n, \phi \rangle = \lim_n\langle \eta_n, \phi \rangle = 0.
\]
Similarly, for any $\phi\in \HS_0$, we have $\lim_n\langle \xi_n, \phi \rangle=\lim_n\langle \nu_n, \phi \rangle = 0$. Since $\HS_0$ is finite-dimensional, this implies $\lim_n\|\nu_n\|=0$. Consequently, we get
\begin{align*}
\langle x \xi_n, x\xi_n \rangle  = \langle x \nu_n, x\nu_n \rangle + \langle x \eta_n, x\eta_n \rangle = 4\|\nu_n\|^2 + \|\eta_n\|^2 \leq 4\|\nu_n\|^2 + 1.
\end{align*}
Thus, $\lim_n\sup\|x\xi_n\|\leq 1$. In particular, if $\nu_n=\mathbf{0}$ for all $n\in \mathbb{N}$, we have $\lim_n\sup\|x\nu_n\|=1$. Therefore, $1=\Delta(x)<2=dist(x,\V).$ \qed
\end{eg}

\noindent We now prove Theorem \ref{Thm: Oper.}.

\begin{proof}[Proof of Theorem \ref{Thm: Oper.}]
Since $\|\xi\otimes \zeta\|_1=1$ for all $\xi,\zeta\in S_\HS$, it is enough to prove the first equality of (\ref{I}). We consider an isometric transformation of the equality, by identifying $\B(\HS)$ as $\LS^1(\B(\HS))^*$ through the isometric isomorphism $\Omega: \B(\HS)\to \LS^1(\B(\HS))^*$, as defined in (\ref{dual trac.}). Under this identification, the first equality of (\ref{I}) transforms to the following form:
\[
\|\rho_x-\rho_v\| = \sup\{|\rho_x(a)|:~a\in N_\V\cap \LS^1(\B(\HS)),~\|a\|_1=1\}.
\]
We now define a linear functional $\rho$ on $N_\V$ by $\rho(b)=\rho_x(b)$. Then $\rho$ extends to a functional $\widetilde{\rho}$ on $\LS^1(\B(\HS))$ with the preservation of norm. Since $(\rho_x-\widetilde{\rho})$ vanishes on $N_\V$, by Lemma \ref{Prelim: Kernel}, we have $(\rho_x-\widetilde{\rho})=\rho_{z_0}$ for some $z_0\in \V$. However, then
\begin{align*}
\|\rho_x-\rho_{z_0}\| & =\|\widetilde{\rho}\|\\
& = \|\rho\|\\
& = \sup\{|\rho_x(a)|:~a\in N_\V\cap \LS^1(\B(\HS)),~\|a\|_1=1\}.
\end{align*}
Consequently, we have
\[
dist(x,\V) \leq \sup\{|Tr(ax)|:~a\in N_\V\cap \LS^1(\B(\HS)),~\|a\|_1=1\}.
\]

\medskip

\noindent Conversely, since $\Omega(\V)$ is finite-dimensional, the existence of a best approximant is always guaranteed. For any best approximation $\rho_{v_0}$ to $\rho_x$ out of $\Omega(\V)$, $\rho_{v_0}$ vanishes on $N_\V$. Consequently,
\begin{align*}
\|\rho_x-\rho_{v_0}\| & \geq \sup\{|\rho_x(a)-\rho_{v_0}(a)|:~a\in N_\V\cap \LS^1(\B(\HS)),~\|a\|_1=1\}\\
& = \sup\{|\rho_x(a)|:~a\in N_\V\cap \LS^1(\B(\HS)),~\|a\|_1=1\},
\end{align*}
which gives
\[
dist(x,\V) \geq \sup\{|Tr(ax)|:~a\in N_\V\cap \LS^1(\B(\HS)),~\|a\|_1=1\}.
\]
This completes the proof.
\end{proof}

\noindent Therefore, we have seen that whenever we consider the best approximation problem from a point $x\in \B(\HS)$ to a finite-dimensional subspace $\V$ of $\KS(\HS)$, the quantity $\Delta(x)$ is only a lower bound of $dist(x,\V)$ and $dist(x,\V)$ is given by Theorem \ref{Thm: Oper.}. However, in certain cases $dist(x,\V)=\Delta(x)$. We record this observation in the following corollary, which also formulates the norm of an element $x$, especially in case (b).

\begin{cor}\label{Cor: compact}
Let $\HS$ be an infinite-dimensional Hilbert space. Let $\V$ be any finite-dimensional subspace of $\KS(\HS)$. Let $x\in \B(\HS)\setminus \KS(\HS)$. In each of the following cases below: 
\begin{itemize}
    \item[(a)]  $x$ attains its norm everywhere,
    \item[(b)] $x$ attains its norm nowhere,
\end{itemize}
there holds
\begin{align*}
dist(x, \KS(\HS))  = & dist(x, \V)\\
= & \max\{|Tr(ax)|:~a\in \LS^1(\B(\HS))\cap N_\V,~\|a\|_1=1\}\\
=& \|x\|.
\end{align*}
\end{cor}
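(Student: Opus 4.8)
The plan is to deduce everything from Theorem \ref{Thm: Oper.} and the Holmes--Kripke formula $dist(x,\KS(\HS))=\Delta(x)$, by pinning down $\|x\|$ in the two regimes. The chain of inequalities $\Delta(x)=dist(x,\KS(\HS))\le dist(x,\V)\le\|x\|$ always holds: the first is Holmes--Kripke, the second because $\V\subseteq\KS(\HS)$, and the third because $\mathbf 0\in\V$. So it suffices in each case to show $\Delta(x)=\|x\|$; Theorem \ref{Thm: Oper.} then identifies $dist(x,\V)$ with the supremum over $a\in\LS^1(\B(\HS))\cap N_\V$, $\|a\|_1=1$, and a separate short argument upgrades the $\sup$ to a $\max$.

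\emph{Case (b): $x$ attains its norm nowhere.} I would take a sequence $(\xi_n)\subset S_\HS$ with $\|x\xi_n\|\to\|x\|$. By weak compactness of the unit ball (and passing to a subsequence) we may assume $\xi_n\xrightarrow{w}\xi_0$ for some $\xi_0$ with $\|\xi_0\|\le 1$. If $\xi_0\ne\mathbf 0$, write $\xi_n=\xi_0+\eta_n$ with $\eta_n\xrightarrow{w}\mathbf 0$; then $\|\xi_n\|^2=\|\xi_0\|^2+\|\eta_n\|^2+2\,\mathrm{Re}\,\langle\xi_0,\eta_n\rangle\to\|\xi_0\|^2+\lim\|\eta_n\|^2$, so $\|\xi_0\|^2+\lim\|\eta_n\|^2=1$, and since $x$ attains its norm nowhere, $\|x\xi_0\|<\|x\|\,\|\xi_0\|$ unless $\xi_0=\mathbf 0$; an expansion of $\|x\xi_n\|^2$ together with $\|x\eta_n\|\le\|x\|\|\eta_n\|$ then forces $\|x\|=\lim\|x\xi_n\|<\|x\|$, a contradiction. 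Hence $\xi_0=\mathbf 0$, i.e. $\xi_n\xrightarrow{w}\mathbf 0$, and then $\Delta(x)\ge\limsup_n\|x\xi_n\|=\|x\|$, giving $\Delta(x)=\|x\|$. The normalization issue (the $\xi_n$ need not be unit vectors after extracting the weak limit, but the $\eta_n=\xi_n-\xi_0$ are not unit either) is handled by rescaling $\eta_n/\|\eta_n\|$, using $\lim\|\eta_n\|=1$ when $\xi_0=\mathbf 0$; I would spell this out carefully as it is the one genuinely delicate computation.

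\emph{Case (a): $x$ attains its norm everywhere.} Here I claim $x$ cannot exist unless... actually the hypothesis already forces $\|x\|=\Delta(x)$ directly: pick any orthonormal sequence $(\zeta_n)$ in $\HS$ (infinite-dimensional), so $\zeta_n\xrightarrow{w}\mathbf 0$; since $x$ attains its norm at every unit vector, $\|x\zeta_n\|=\|x\|$ for all $n$, hence $\Delta(x)\ge\limsup_n\|x\zeta_n\|=\|x\|$, so $\Delta(x)=\|x\|$. Combined with the displayed chain of inequalities this closes both cases for the first three equalities; the last equality $dist(x,\V)=\|x\|$ is then immediate.

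\emph{Sup to max.} Finally, to replace $\sup$ by $\max$ in the formula from Theorem \ref{Thm: Oper.}, observe that the set $K:=\{a\in\LS^1(\B(\HS))\cap N_\V:\|a\|_1\le 1\}$ is the unit ball of a finite-codimension subspace of the dual $\KS(\HS)^*=\LS^1(\B(\HS))$ (via $\Gamma$ of (\ref{dual comp.})), cut out by the finitely many weak$^*$-closed conditions $Tr(ay_j)=0$; hence $K$ is weak$^*$-compact, and $a\mapsto |Tr(ax)|$ is weak$^*$-continuous precisely because $x\in\KS(\HS)$... but $x\notin\KS(\HS)$ here, so instead I would argue via Theorem \ref{Singer}: the best approximation $\mathbf 0$ (note $dist(x,\V)=\|x-\mathbf 0\|$, so $\mathbf 0$ is a best approximant) yields extreme points $f_1,\dots,f_k\in S_{\LS^1(\B(\HS))^*}$ and weights $\lambda_i$ with $\sum\lambda_i f_i(x)=\|x\|$ and $\sum\lambda_i f_i|_\V=0$; extreme points of the unit ball of $\LS^1(\B(\HS))^*=\B(\HS)$ are... not needed — it is cleaner to note that the functional $a\mapsto Tr(ax)$ on the finite-codimensional subspace $\LS^1(\B(\HS))\cap N_\V$ has norm $dist(x,\V)=\|x\|$ and is attained by Theorem \ref{Thm: Oper.}'s construction, since in its proof $\widetilde\rho$ is a Hahn--Banach extension and norm-attainment of $\rho=\Omega(x)|_{N_\V}$ on the unit ball of the (non-reflexive) space $\LS^1(\B(\HS))\cap N_\V$ follows from weak$^*$ compactness of $M_x$ in $\B(\HS)=\LS^1(\B(\HS))^*$ together with $M_x\cap N_\V\ne\emptyset$ (which holds exactly because $\mathbf 0$ is a best approximant, by Theorem \ref{Thm: Trace. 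Oper.} applied in the appropriate dual pairing). The main obstacle I anticipate is precisely this last point — justifying that the supremum is attained, i.e. producing the norm-attaining $a\in N_\V$ — and I expect the cleanest route is to invoke weak$^*$ compactness of the set $M_{x}$ of norming functionals for $x\in\B(\HS)=\LS^1(\B(\HS))^*$ and intersect it with the weak$^*$-closed set $N_\V\cap\LS^1$, noting the intersection is nonempty because $\mathbf 0$ realizes the distance.
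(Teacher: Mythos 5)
Your reduction and the two case analyses are correct and follow essentially the paper's strategy: everything hinges on showing that $\mathbf 0$ is a best approximation to $x$ out of $\KS(\HS)$, i.e. $dist(x,\KS(\HS))=\|x\|$, after which the squeeze $\|x\|=dist(x,\KS(\HS))\le dist(x,\V)\le\|x-\mathbf 0\|=\|x\|$ together with Theorem \ref{Thm: Oper.} finishes the job. In case (b) the paper simply cites \cite[Lemma 2]{HK} for the fact that every norming sequence of a non-norm-attaining operator tends weakly to $\mathbf 0$; your weak-limit/expansion argument is a correct re-derivation of that lemma (and the rescaling worry is moot: once $\xi_0=\mathbf 0$ the original unit vectors $\xi_n$ themselves form the required sequence). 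In case (a) the paper argues differently: it writes $x=\|x\|v$ with $v$ an isometry and uses that $\KS(\HS)$ is an ideal, so $(\|x\|v-y)^*(\|x\|v-y)=\|x\|^2-y'$ with $y'$ compact, whence $\|x\|^2\in\sigma\bigl((x-y)^*(x-y)\bigr)$ and $\|x-y\|\ge\|x\|$ for every compact $y$; your computation of $\Delta(x)$ along an orthonormal sequence reaches the same conclusion and is, if anything, more elementary. Up to this point the proposal is sound.

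The genuine gap is the ``sup to max'' step, and you correctly sensed that this is the obstacle, but neither of your proposed fixes can work. The set of norming elements for $\Omega(x)\big|_{N_\V}$ would have to live inside $\LS^1(\B(\HS))$, which in the relevant pairing is a \emph{predual}, not a dual: since $x\notin\KS(\HS)$, the functional $a\mapsto Tr(ax)$ is not weak$^*$ continuous on $\LS^1(\B(\HS))=\KS(\HS)^*$, so no compactness is available; and Theorem \ref{Thm: Trace. Oper.}, whose set $M_{x-y_0}$ sits in the von Neumann algebra $\B(\HS)$ where $\sigma$-WOT compactness holds, does not transfer to this dual pairing. Worse, in case (b) the supremum is provably \emph{not} attained: if $a\in\LS^1(\B(\HS))$ has singular value decomposition $a=\sum_n s_n\,\xi_n\otimes\zeta_n$ with orthonormal systems $(\xi_n),(\zeta_n)$ and $\|a\|_1=\sum_n s_n=1$, then $|Tr(ax)|=|\sum_n s_n\langle x\xi_n,\zeta_n\rangle|\le\sum_n s_n\|x\xi_n\|<\|x\|$, because $x$ attains its norm at no unit vector. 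So no argument can upgrade the $\sup$ of Theorem \ref{Thm: Oper.} to a $\max$ here; the defensible conclusion is the statement with $\sup$. (For what it is worth, the paper's own proof also does nothing to justify the word $\max$ in the displayed formula, so on the substantive mathematics your proposal matches it.)
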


\begin{proof}
In the first case $x=\|x\| v$ for some isometry $v$ on $\HS$. For any $y\in \KS(\HS)$, we have $(\|x\| v - y)^*(\|x\| v - y)=\|x\|^2 - y'$ for some $y'\in \KS(\HS)$, since $\KS(\HS)$ is a closed two-sided ideal in $\B(\HS)$. Thus, $\|x\|^2\in \sigma((\|x\| v - y)^*(\|x\| v - y))$, and we have $\|\|x\|v-y\|\geq \|x\|$, i.e., $\mathbf{0}$ is a best approximation to $x$ out of $\KS(\HS)$. When $x$ fails to attain its norm, it follows from \cite[Lemma 2]{HK} that any norming sequence $(\xi_n)$ of $x$ ($(\xi_n)\subset S_\HS$, $\|x \xi_n\|\to \|x\|$), $\xi_n\xrightarrow{w} \mathbf{0}$. Therefore, $\Delta(x)=\|x\|$, and again $\mathbf{0}$ is a best approximation to $x$ out of $\KS(\HS)$. Thus, we have the desired conclusion by Theorem \ref{Thm: Oper.}.
\end{proof}

\noindent Using Theorem \ref{Thm: Oper.}, we now prove Theorem \ref{Best approx. B(H)}.

\begin{proof}[Proof of Theorem \ref{Best approx. B(H)}]
If $z\in \V$ is a best approximation to $x$ out of $\V$, then $\Omega(x-z)\big|_{N_\V}=\Omega(x)\big|_{N_\V}$, as $\Omega(z)\in \Omega(\V)$ vanishes on $N_\V$. Now, by Theorem \ref{Thm: Oper.}
\begin{align*}
\|\Omega(x)\big|_{N_\V}\| & = \sup\{|Tr(ax)|:~a\in N_\V\cap \LS^1(\B(\HS)),~\|a\|_1=1\}\\
& =  dist(x,\V)\\
& = \|x-z\|\\
& =\|\Omega(x-z)\|.
\end{align*}
Conversely, if $\Omega(z-x)$ is a Hahn-Banach extension of $\Omega(x)\big|_{N_\V}$, then by virtue of Theorem \ref{Thm: Oper.}
\begin{align*}
\|x-z\| & =\|\Omega(x-z)\|\\
& = \|\Omega(x)\big|_{N_\V}\|\\
& = \sup\{|Tr(ax)|:~a\in N_\V\cap \LS^1(\B(\HS)),~\|a\|_1=1\}\\
& = dist(x,\V),
\end{align*}
and the proof is complete.
\end{proof}

\section{Distance and Best approximations in $\LS^1(\B(\HS))$ }

\noindent This section is devoted to the study of best approximations and distance problems in trace class norm. Similar to Theorem \ref{Thm: Oper.}, we can prove the first equality of Theorem \ref{Thm: dist. trace} by identifying $\LS^1(\B(\HS))$ to the dual of $\KS(\HS)$ through the isometric isomorphism $\Gamma: \LS^1(\B(\HS))\to \KS(\HS)^*$ as defined in (\ref{dual comp.}). Also, analogous to Theorem \ref{Best approx. B(H)}, we have a characterization of best approximations in the trace class. We only state the results as the proofs are similar to the proofs of Theorems \ref{Thm: Oper.} and \ref{Best approx. B(H)}.

\begin{prop}\label{sim. trace.}
Let $\HS$ be a Hilbert space and $\V$ be any finite-dimensional subspace of $\LS^1(\B(\HS))$. Let $x\in \LS^1(\B(\HS))\setminus \V$. Then
\begin{align*}
dist_1(x,\V) = \sup \{|Tr(ax)|:~a\in \KS(\HS)\cap N_\V,~\|a\|=1\}.
\end{align*}
\end{prop}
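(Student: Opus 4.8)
The plan is to mirror exactly the duality argument used in the proof of Theorem~\ref{Thm: Oper.}, but now with the roles of the spaces shifted down one level: instead of viewing $\B(\HS)$ as $\LS^1(\B(\HS))^*$, we use the identification $\Gamma:\LS^1(\B(\HS))\to\KS(\HS)^*$ from \eqref{dual comp.}. Under this identification, the element $x\in\LS^1(\B(\HS))$ becomes the functional $\tau_x(a)=Tr(ax)$ on $\KS(\HS)$, and the finite-dimensional subspace $\V$ becomes the finite-dimensional subspace $\Gamma(\V)$ of $\KS(\HS)^*$. The quantity we want to compute, $dist_1(x,\V)$, is then precisely $dist(\tau_x,\Gamma(\V))$ in $\KS(\HS)^*$, because $\Gamma$ is an isometric isomorphism.

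First I would observe that the relevant annihilator is $N_\V\cap\KS(\HS)=\{a\in\KS(\HS):Tr(ay)=0,\ y\in\V\}$, which is exactly the pre-annihilator in $\KS(\HS)$ of the finite set of functionals $\{\tau_{y_1},\dots,\tau_{y_n}\}$ obtained from a basis of $\V$. The upper bound is the Hahn--Banach step: define $\tau$ on $N_\V\cap\KS(\HS)$ by restriction of $\tau_x$, extend it norm-preservingly to $\widetilde{\tau}$ on all of $\KS(\HS)$; then $\tau_x-\widetilde{\tau}$ annihilates $N_\V\cap\KS(\HS)$, so by Lemma~\ref{Prelim: Kernel} (applied to the functionals $\tau_x,\tau_{y_1},\dots,\tau_{y_n}$ on $\KS(\HS)$) we get $\tau_x-\widetilde{\tau}=\tau_{z_0}$ for some $z_0\in\V$, hence
\[
dist_1(x,\V)\le\|\tau_x-\tau_{z_0}\|=\|\widetilde{\tau}\|=\|\tau\|=\sup\{|Tr(ax)|:a\in\KS(\HS)\cap N_\V,\ \|a\|=1\}.
\]
Here the one genuine point to check is that $\Gamma(\V)=\{\tau_{y}:y\in\V\}$ is finite-dimensional so that a best approximant exists, which is immediate since $\Gamma$ is injective and $\dim\V<\infty$.

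For the reverse inequality I would argue exactly as in the operator-norm case: since $\Gamma(\V)$ is a finite-dimensional subspace of the Banach space $\KS(\HS)^*$, a best approximation $\tau_{v_0}$ to $\tau_x$ out of $\Gamma(\V)$ exists, and $\tau_{v_0}$ vanishes on $N_\V\cap\KS(\HS)$; therefore
\[
dist_1(x,\V)=\|\tau_x-\tau_{v_0}\|\ge\sup\{|\tau_x(a)-\tau_{v_0}(a)|:a\in\KS(\HS)\cap N_\V,\ \|a\|=1\}=\sup\{|Tr(ax)|:a\in\KS(\HS)\cap N_\V,\ \|a\|=1\},
\]
which combined with the previous inequality gives the stated equality. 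I do not anticipate a serious obstacle here; the only thing to be slightly careful about is that the annihilator of $\Gamma(\V)$ inside $\KS(\HS)$ coincides with $N_\V\cap\KS(\HS)$ and that the trace pairing $Tr(ax)$ is well defined whenever at least one of $a,x$ is trace class (here $x\in\LS^1(\B(\HS))$ and $a$ is compact, or vice versa), so all the functionals involved make sense. As the paper notes, the whole argument is a transparent adaptation of the proof of Theorem~\ref{Thm: Oper.}, with $(\B(\HS),\LS^1(\B(\HS)))$ replaced by $(\LS^1(\B(\HS)),\KS(\HS))$ and $\Omega$ replaced by $\Gamma$, so it suffices to indicate these substitutions rather than rewrite the details.
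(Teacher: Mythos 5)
Your proposal is correct and is precisely the argument the paper intends: the paper omits the proof of this proposition, saying only that it follows the proof of Theorem~\ref{Thm: Oper.} with the duality $\Omega:\B(\HS)\to\LS^1(\B(\HS))^*$ replaced by $\Gamma:\LS^1(\B(\HS))\to\KS(\HS)^*$, which is exactly the substitution you carry out (Hahn--Banach extension of $\tau_x$ restricted to $N_\V\cap\KS(\HS)$, Lemma~\ref{Prelim: Kernel} to identify the difference with some $\tau_{z_0}$, $z_0\in\V$, and the existence of a best approximant from the finite-dimensional $\Gamma(\V)$ for the reverse inequality). No gaps.
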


\begin{prop}
Let $\HS$ be a Hilbert space and $\V$ be a finite-dimensional subspace of $\B(\HS)$. Let $x\in \B(\HS)\setminus \V$. Then $z\in \V$ is a best approximation to $x$ out of $\V$ if and only if $\Gamma(x-z)$ is a Hahn-Banach extension of $\Gamma(x)\big|_{N_\V}$.
\end{prop}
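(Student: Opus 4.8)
The plan is to run the argument of Theorem \ref{Best approx. B(H)} essentially verbatim, with the duality $\Omega:\B(\HS)\to\LS^1(\B(\HS))^*$ replaced by $\Gamma:\LS^1(\B(\HS))\to\KS(\HS)^*$, the operator norm replaced by the trace norm, and Proposition \ref{sim. trace.} playing the role that Theorem \ref{Thm: Oper.} played there. Taking the statement as written, I read $\Gamma(x)$ and $\Gamma(x-z)$ through this isometric isomorphism; for these symbols to be defined, $x$ and $x-z$ must lie in $\LS^1(\B(\HS))$, and since $\Gamma$ transports the trace norm, the notion of best approximation in force is the one measured by $\|\cdot\|_1$. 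This is precisely the companion of Proposition \ref{sim. trace.}, which I will use as the distance formula driving the equivalence. I record the two facts needed repeatedly: $\|\Gamma(w)\|=\|w\|_1$ for every $w\in\LS^1(\B(\HS))$, and $\Gamma(x)\big|_{N_\V}$ is to be understood as the restriction of the functional $\tau_x$ to $N_\V\cap\KS(\HS)$, which is exactly the domain appearing in the supremum of Proposition \ref{sim. trace.}.

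The preliminary step is to observe that every $\Gamma(y)$ with $y\in\V$ annihilates $N_\V\cap\KS(\HS)$: for $a\in N_\V$ one has $\Gamma(y)(a)=Tr(ay)=0$ by the very definition of $N_\V$. In particular $\Gamma(z)$ vanishes on $N_\V\cap\KS(\HS)$ for every $z\in\V$, so that $\Gamma(x-z)\big|_{N_\V\cap\KS(\HS)}=\Gamma(x)\big|_{N_\V\cap\KS(\HS)}$ automatically; that is, $\Gamma(x-z)$ is always a linear extension of $\Gamma(x)\big|_{N_\V}$. Hence the entire content of the statement collapses to the norm equality $\|\Gamma(x-z)\|=\big\|\Gamma(x)\big|_{N_\V}\big\|$, and the proof becomes a single computation read in two directions.

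For the forward implication I would assume $z$ is a best approximation, so $\|x-z\|_1=dist_1(x,\V)$, and compute
\begin{align*}
\|\Gamma(x-z)\| &= \|x-z\|_1 = dist_1(x,\V)\\
&= \sup\{|Tr(ax)|:~a\in\KS(\HS)\cap N_\V,~\|a\|=1\} = \big\|\Gamma(x)\big|_{N_\V}\big\|,
\end{align*}
where the first equality is the isometry of $\Gamma$, the third is Proposition \ref{sim. trace.}, and the last is the definition of the functional norm of $\tau_x$ on $N_\V\cap\KS(\HS)$. Combined with the automatic extension property, this says exactly that $\Gamma(x-z)$ is a Hahn-Banach extension of $\Gamma(x)\big|_{N_\V}$. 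For the converse I would start from the norm equality $\|\Gamma(x-z)\|=\big\|\Gamma(x)\big|_{N_\V}\big\|$ guaranteed by the Hahn-Banach hypothesis and reverse the chain, $\|x-z\|_1=\|\Gamma(x-z)\|=\big\|\Gamma(x)\big|_{N_\V}\big\|=dist_1(x,\V)$, whence $z$ realizes the distance and is a best approximation.

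I do not anticipate a genuine analytic obstacle, since both directions are the same computation with Proposition \ref{sim. trace.} at its core. The points demanding care are bookkeeping rather than mathematics: first, reading $\Gamma(x)\big|_{N_\V}$ as the restriction to $N_\V\cap\KS(\HS)$ so that its functional norm matches the supremum in Proposition \ref{sim. trace.}; and second, making explicit that the occurrence of $\Gamma$ forces the relevant point and subspace into $\LS^1(\B(\HS))$ and the approximation into $\|\cdot\|_1$, so that the cited distance formula applies. Once these identifications are pinned down, the equivalence follows formally, exactly paralleling the proof of Theorem \ref{Best approx. B(H)}.
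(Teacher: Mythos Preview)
Your proposal is correct and follows exactly the approach the paper intends: it explicitly says the proof is omitted because it is ``similar to the proofs of Theorems \ref{Thm: Oper.} and \ref{Best approx. B(H)}'', and what you have written is precisely the line-by-line translation of the proof of Theorem \ref{Best approx. B(H)} with $\Gamma$ in place of $\Omega$, $\|\cdot\|_1$ in place of $\|\cdot\|$, and Proposition \ref{sim. trace.} supplying the distance formula in place of Theorem \ref{Thm: Oper.}. Your remark that the hypotheses should really read $x\in\LS^1(\B(\HS))$ and $\V\subset\LS^1(\B(\HS))$ (so that $\Gamma(x)$, $\Gamma(x-z)$ make sense and Proposition \ref{sim. trace.} applies) is well taken; the $\B(\HS)$ in the stated proposition appears to be a copy--paste slip from Theorem \ref{Best approx. B(H)}.
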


\noindent However, utilizing the $\sigma$-WOT compactness of the closed unit ball of $\B(\HS)$, we prove Theorem \ref{Thm: Trace. Oper.} which also characterizes best approximations in trace class and has a substantial role in the sequel. We also use the notion of Birkhoff-James orthogonality  \cite{B,Ja} in the proof. For any two points $x$ and $y$ in a normed linear space $X$, $x$ is said to be Birkhoff-James orthogonal to $y$ (symbolized as $x\perp_B y$) if $\|x+\lambda y\|\geq \|x\|$ for all scalars $\lambda$. Birkhoff-James orthogonality is characterized by the well-known result of James \cite{Ja}, namely, 

\begin{lem}
Let $X$ be a normed linear space and $x,y\in X$ with $x\neq \mathbf{0}$. Then $x\perp_B y$ if and only if there exists $f\in S_{X^*}$ such that $f(x)=\|x\|$ and $f(y)=0$. 
\end{lem}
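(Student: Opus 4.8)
The plan is to establish the two implications separately; the converse is a one-line consequence of the defining inequality of the dual norm, while the forward implication is the substantive one and rests on a single invocation of the Hahn--Banach theorem, in the spirit of James's original argument. For the converse: if $f\in S_{X^*}$ satisfies $f(x)=\|x\|$ and $f(y)=0$, then for every scalar $\lambda$ we have $\|x+\lambda y\|\geq |f(x+\lambda y)|=|f(x)+\lambda f(y)|=|f(x)|=\|x\|$, so $x\perp_B y$ by definition.

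For the forward implication, assume $x\perp_B y$ with $x\neq\mathbf{0}$. If $y=\mathbf{0}$ the assertion reduces to the existence of a norming functional for $x$, which is a standard consequence of Hahn--Banach, so suppose $y\neq\mathbf{0}$ and set $Z=span\{y\}$. The orthogonality relation says exactly that $\|x-z\|\geq\|x\|$ for all $z\in Z$, i.e. $dist(x,Z)=\|x\|>0$; in particular $x\notin Z$, so $\{x,y\}$ is linearly independent and each element of $M:=span\{x,y\}$ has a unique representation $\alpha x+\beta y$. Define $g$ on $M$ by $g(\alpha x+\beta y)=\alpha\|x\|$, so that $g(x)=\|x\|$ and $g(y)=0$.

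The crux is to check that $g$ has norm one on $M$. For $\alpha=0$ this is vacuous, and for $\alpha\neq 0$, applying $x\perp_B y$ with the scalar $\beta/\alpha$ gives $|g(\alpha x+\beta y)|=|\alpha|\,\|x\|\leq |\alpha|\,\|x+(\beta/\alpha)y\|=\|\alpha x+\beta y\|$; hence $\|g\|\leq 1$, while $g(x)=\|x\|$ forces $\|g\|=1$. By the Hahn--Banach theorem $g$ extends to some $f\in X^*$ with $\|f\|=1$, and this $f$ satisfies $f(x)=\|x\|$ and $f(y)=0$, as required. The only points needing care are that $M$ is genuinely two-dimensional — guaranteed by $x\notin Z$ — and that one invokes the complex form of Hahn--Banach in the extension step; beyond the defining inequality of $\perp_B$ no estimate is involved, so there is no real obstacle here.
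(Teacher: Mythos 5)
Your proof is correct. The paper does not actually prove this lemma --- it is quoted as James's classical characterization with a citation to \cite{Ja} --- and your argument is precisely the standard one: the converse via the dual-norm inequality, and the forward direction by defining $g(\alpha x+\beta y)=\alpha\|x\|$ on $\mathrm{span}\{x,y\}$, verifying $\|g\|=1$ directly from the Birkhoff--James inequality, and extending by Hahn--Banach. All the delicate points (linear independence of $\{x,y\}$ when $y\neq\mathbf{0}$, the degenerate case $y=\mathbf{0}$, and the use of the complex Hahn--Banach theorem since the paper works over $\mathbb{C}$) are handled correctly.
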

\noindent The collection 
\begin{align}\label{supp. func.}
J(x)=\{f\in S_{X^*}:~f(x)=\|x\|\}    
\end{align} 
is known as the collection of support functionals at $x$ which is always non-empty by the Hahn-Banach Theorem.

\begin{proof}[Proof of Theorem \ref{Thm: Trace. Oper.}]
We identify $\LS^1(B(\HS))$ to the space of $\sigma$-WOT continuous linear functionals on $\B(\HS)$ through the canonical embedding $\pi:\LS^1(\B(\HS))\to \LS^1(\B(\HS))^{**}$, defined as
\[
\pi(b)(z)=\varphi_z(b)=Tr(bz), \quad z\in \B(\HS).
\]
The collection $M_{x-y_0}$ is a face of the closed unit ball of $\B(\HS)$. Moreover, $M_{x-y_0}$ is, convex, $\sigma$-WOT compact subset of the unit sphere of $\B(\HS)$.

\medskip

\noindent If $M_{x-y_0}\cap N_\V= \emptyset$, by the Geometric Hahn-Banach Theorem, there exists a $\sigma$-WOT continuous linear functional $f:\B(\HS)\to \mathbb{C}$ and a positive real number $\alpha$ such that 
\[
Re~f(s)< \alpha < Re~f(z), \quad s\in N_\V,~z\in M_{x-y_0}.
\]
Since $N_\V$ is a subspace, we must have $Re~f(s)=0$ for all $s\in N_\V$. If $f(s_0)\neq 0$, for some $s_0\in S$, then $f(s_0)$ is purely imaginary. However, then $f(\iota s_0)\in \mathbb{R}$ and is non-zero, which contradicts that $Re~f(s)=0$ for all $s\in N_\V$. Therefore, we conclude $f(s)=0$ for all $s\in N_\V$. However, then $N_\V\subset \ker~f$ and $f\in \pi(\V)$ by Lemma \ref{Prelim: Kernel}.

\medskip

\noindent Let $f=\pi(v)$ for some $v\in \V$. Let $a\in \B(\HS)$ be a norm one element. Observe that $a$ is a support functional at $(x-y_0)$ if and only if
\[
\varrho_a(x-y_0)=Tr(a(x-y_0))=\|x-y_0\|_1=\pi(x-y_0)(a),
\]
i.e., $a\in M_{x-y_0}$. However, 
\[
Re~\varrho_a(v) = Re~Tr(av) = Re~\pi(v)(a) = Re~f(a)> \alpha, \quad a\in M_{x-y_0},
\]
which implies that no support function at $(x-y_0)$ vanishes at $w$. Consequently, by the James characterization $(x-y_0)\not\perp_B w$. Therefore, there exists $\lambda\in \mathbb{C}$ such that
\[
\|(x-y_0)+\lambda w\|_1 = \|x-(y_0-\lambda w)\|_1 < \|x-y_0\|_1,
\]
and $y_0$ fails to be a best approximation to $x$, a contradiction. Therefore, $M_{x-y_0}\cap N_\V\neq \emptyset$.

\medskip

\noindent Conversely, suppose that there exists $a_0\in M_{x-y_0}\cap N_\V$. Since $\|a_0\|=1$ and $Tr(a_0v)=0$ for all $v\in \V$, we have
\[
\|x-v\|_1\geq |Tr(a_0(x-v))| = |Tr(a_0x)| = |Tr(a_0x-a_0y_0)| = \|x-y_0\|_1, \quad w\in \V.
\]
Consequently, $y_0$ is the best approximation to $x$ out of $\V$.\\
This completes the proof.
\end{proof}

\noindent We now prove Theorem \ref{Thm: dist. trace}. We require the description of extreme points of the closed unit ball of $\LS^1(\B(\HS))^*$.

\begin{prop}\label{extr. dual. trace.}
$Ext(B_{\LS^1(\B(\HS))^*})= \{\rho_u:~x\in \B(\HS),~u^*u=Id_\HS~\text{or}~uu^*=Id_\HS\}.$
\end{prop}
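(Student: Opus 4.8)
Via the surjective linear isometry $\Omega:\B(\HS)\to\LS^1(\B(\HS))^*$ of (\ref{dual trac.}), which maps $B_{\B(\HS)}$ onto $B_{\LS^1(\B(\HS))^*}$ and hence $Ext\big(B_{\B(\HS)}\big)$ onto $Ext\big(B_{\LS^1(\B(\HS))^*}\big)$, it suffices to show that $Ext\big(B_{\B(\HS)}\big)$ consists exactly of the maximal partial isometries, i.e.\ the $u\in\B(\HS)$ with $u^*u=Id_\HS$ or $uu^*=Id_\HS$ (this is Kadison's classical description of the extreme points of the operator ball). For the ``easy'' inclusion: if $u^*u=Id_\HS$ and $u=\frac{1}{2}(a+b)$ with $\|a\|,\|b\|\le1$, then for every $\xi\in S_\HS$ one has $\|u\xi\|=1$ while $\|a\xi\|\le1$ and $\|b\xi\|\le1$, so strict convexity of the Hilbert space norm forces $a\xi=b\xi=u\xi$; hence $a=b=u$ and $u$ is extreme. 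If instead $uu^*=Id_\HS$, then $u^*$ is an isometry, hence extreme by the previous case, and since $a\mapsto a^*$ is a real-affine isometric bijection of $B_{\B(\HS)}$, it preserves extreme points, so $u=(u^*)^*$ is extreme.

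For the converse I would fix $u\in Ext\big(B_{\B(\HS)}\big)$ and, whenever $u$ is \emph{not} a maximal partial isometry, exhibit a nonzero $z\in\B(\HS)$ with
\[
u^*z+z^*u=\mathbf{0}\qquad\text{and}\qquad z^*z\le Id_\HS-u^*u .
\]
Granting such a $z$, one gets $(u\pm z)^*(u\pm z)=u^*u\pm(u^*z+z^*u)+z^*z=u^*u+z^*z\le Id_\HS$, hence $\|u\pm z\|\le1$, and then $u=\frac{1}{2}\big((u+z)+(u-z)\big)$ with $u+z\neq u-z$ contradicts extremality; so the whole converse reduces to constructing $z$. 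If $h:=u^*u$ is not a projection then $\sigma(h)\not\subseteq\{0,1\}$, so I pick $t_0\in\sigma(h)\cap(0,1)$ and $\rho>0$ with $[t_0-\rho,\,t_0+\rho]\subset(0,1)$, let $E:=\chi_{[t_0-\rho,\,t_0+\rho]}(h)\neq\mathbf{0}$, and set $z:=\iota\varepsilon\,uE$ with $\varepsilon^2=\big(1-(t_0+\rho)\big)/(t_0+\rho)>0$. Since $E$ is a spectral projection of $h=u^*u$ it commutes with $h$, which makes the cross term $u^*z+z^*u=\iota\varepsilon(hE-Eh)$ vanish; moreover $z^*z=\varepsilon^2\,hE$, and because $h$, $E$ and $Id_\HS$ mutually commute the inequality $z^*z\le Id_\HS-h$ is checked separately on the range of $E$ (where it reads $(1+\varepsilon^2)h\le Id_\HS$, true since $h\le(t_0+\rho)Id_\HS=(1+\varepsilon^2)^{-1}Id_\HS$ there) and on its orthogonal complement (where it is trivial); finally $z\neq\mathbf{0}$ because $h\ge t_0-\rho>0$ on the range of $E$. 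Thus $u$ extreme forces $u^*u$ to be a projection, i.e.\ $u$ to be a partial isometry.

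It remains to treat a partial isometry $u$ with $u^*u\neq Id_\HS$ and $uu^*\neq Id_\HS$. Then $uu^*$ is again a projection, and $\ker u$ and $\ker u^*$ are both nonzero (being the ranges of $Id_\HS-u^*u$ and $Id_\HS-uu^*$, which are the projections onto $(\ker u)^\perp$ and $(\ker u^*)^\perp$). Choosing unit vectors $\xi_0\in\ker u$ and $\eta_0\in\ker u^*$ and putting $z:=\eta_0\otimes\xi_0$, one has $u^*z=z^*u=\mathbf{0}$ (both vanish because $u^*\eta_0=\mathbf{0}$), while $z^*z=\xi_0\otimes\xi_0$ is the rank-one orthogonal projection onto $\mathbb{C}\xi_0\subseteq\ker u$, the range of $Id_\HS-u^*u$, so $z^*z\le Id_\HS-u^*u$; and $z\neq\mathbf{0}$. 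In both cases $u$ is not extreme, which completes the characterization. The main obstacle is this converse, and within it the choice of $z$: the clause $u^*z+z^*u=\mathbf{0}$ — which is exactly what makes $\|u\pm z\|\le1$ automatic — is secured in the first case only by the commutation $[E,u^*u]=\mathbf{0}$ and in the second only by the orthogonality relations defining $\xi_0,\eta_0$; once $z$ is in hand, the remaining inequality $z^*z\le Id_\HS-u^*u$ is a routine scalar spectral estimate.
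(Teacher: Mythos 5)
Your proof is correct. The reduction is the same as the paper's: both transport the problem through the surjective linear isometry $\Omega:\B(\HS)\to\LS^1(\B(\HS))^*$, which carries $Ext(B_{\B(\HS)})$ bijectively onto $Ext(B_{\LS^1(\B(\HS))^*})$, so that everything rests on Kadison's description of the extreme points of the operator ball as the isometries and co-isometries. The difference is that the paper simply cites \cite{Kad} for this fact, whereas you prove it in full: strict convexity of $\HS$ for the easy inclusion (plus the real-affine symmetry $a\mapsto a^*$ to pass from isometries to co-isometries), and for the converse the perturbation scheme $u\mapsto u\pm z$ with $u^*z+z^*u=\mathbf{0}$ and $z^*z\le Id_\HS-u^*u$, instantiated once by a spectral projection $E=\chi_{[t_0-\rho,t_0+\rho]}(u^*u)$ when $u^*u$ is not a projection, and once by a rank-one operator $\eta_0\otimes\xi_0$ joining $\ker u$ to $\ker u^*$ when $u$ is a non-maximal partial isometry. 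I checked both constructions: the cross terms vanish for the reasons you give, the inequality $(1+\varepsilon^2)h\le Id_\HS$ on the range of $E$ follows from your choice $\varepsilon^2=(1-(t_0+\rho))/(t_0+\rho)$, and $z\neq\mathbf{0}$ in each case, so $u$ is genuinely a midpoint of two distinct contractions. What your version buys is self-containment at the cost of length; what the paper's buys is brevity by outsourcing the classical theorem. One cosmetic slip: in the rank-one case you describe $Id_\HS-u^*u$ and $Id_\HS-uu^*$ as ``the projections onto $(\ker u)^\perp$ and $(\ker u^*)^\perp$''; it is $u^*u$ and $uu^*$ that project onto those complements, while $Id_\HS-u^*u$ and $Id_\HS-uu^*$ project onto $\ker u$ and $\ker u^*$, which is what your argument actually uses.
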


\begin{proof}
The proof follows from the identification of $\Omega: \B(\HS)\to \LS^1(\B(\HS))^*$ and the fact that an extreme point of the closed unit ball of $\B(\HS)$ is either an isometry or a co-isometry \cite{Kad}.
\end{proof}

\begin{proof}[Proof of Theorem \ref{Thm: dist. trace}]
The first equality of (\ref{Dist. Trace}) follows from Proposition \ref{sim. trace.}. Since $\V$ is finite-dimensional, there exists $v_0\in \V$ such that $dist_1(x,\V)=\|x-v_0\|_1$. Observe that
\begin{align*}
dist_1(x,\V) & = \|x-v_0\|_1\\
& = \|\pi(x-y_0)\|\\
& = \sup \{|\pi(x-y_0)(a)|:~a\in \B(\HS),~\|a\|=1\}\\
& = \max \{|\pi(x-y_0)(a)|:~a\in \B(\HS)\cap N_\V,~\|a\|=1\} 
\end{align*}
since $M_{x-y_0}\cap N_\V\neq \emptyset$ by Theorem \ref{Thm: Trace. Oper.}. Thus, the second equality is proved.

\medskip

\noindent We now prove the third equality. It follows from Theorem \ref{Singer} and Proposition \ref{extr. dual. trace.} that there exist $w_1,\dots,w_k\in \B(\HS)$ $(1\leq k \leq 2n+1)$ and scalars $\lambda_1,\dots, \lambda_k\in [0,1]$ with $\sum_{i=1}^k \lambda_i = 1$ such that $w_i^*w_i=Id_\HS$ or $w_iw_i^*=Id_\HS$, for each $i\in \{1,\dots,k\}$, $\sum_{i=1}^k \lambda_i\rho_{w_i}(x-v_0) = \|x-v_0\|$ and $\sum_{i=1}^k \lambda_i\rho_{w_i}(v) = 0$ for all $v\in \V$. Thus,  $Im~\sum_{i=1}^k \lambda_i\rho_{w_i}(x-v_0)=Im~\sum_{i=1}^k \lambda_i\rho_{w_i}(x)=0$. In particular, the operator tuple $(w_1,\dots,w_k)\in \B(\HS)^k$ satisfies the following properties:
\begin{itemize}
    \item[(a)] $Im~\sum_{i=1}^k\lambda_i\rho_{w_i}(x)=0$
    \item[(b)] $\sum_{i=1}^k \lambda_i\rho_{w_i}(v) = 0$ for all $v\in \V$
    \item[(c)] $w_i^*w_i=Id_\HS$ or $w_iw_i^*=Id_\HS$, for each $i\in \{1,\dots,k\}$.
\end{itemize}
Additionally, $\sum_{i=1}^k \lambda_i\rho_{w_i}(x-v_0) = \sum_{i=1}^k \lambda_i\rho_{w_i}(x) = \|x-v_0\| = dist_1(x,\V)$. Let
\[
\Lambda := \{(u_1,\dots,u_k)\in \B(\HS)^k:~(u_1,\dots,u_k)~\text{satisfies}~(a),(b),(c)\}.
\]
Then $(w_1,\dots,w_k)\in \Lambda$ and $\max\{\sum_{i=1}^k \lambda_i\rho_{w_i}(x):~(w_1,\dots,w_k)\in\Lambda\} \geq dist_1(x,\V).$ However, the inequality is an equality, since for any $(u_1,\dots,u_k)\in \Lambda$, we have $\|\rho_{u_i}\|=\|u_i\|=1$ $(1\leq i \leq k)$, and therefore, $\sum_{i=1}^k \lambda_i\rho_{u_i}(x) = \sum_{i=1}^k \lambda_i\rho_{u_i}(x-v_0) \leq \|x-v_0\|_1 = dist_1(x, \V).$
Consequently, using (\ref{dual trac.}) we get
\[
\max\{\sum_{i=1}^k\lambda_i\rho_{u_i}(x):~(u_1,\dots,u_k)\in \Lambda\} = \max\{\sum_{i=1}^k\lambda_iTr(u_i x):~(u_1,\dots,u_k)\in \Lambda\} = dist_1(x, \V),
\]
and the third equality of (\ref{Dist. Trace}) follows. This completes the proof.
\end{proof}

\noindent The computational advantage of Theorem \ref{Dist. Trace} and Theorem \ref{Thm: Trace. Oper.} is illustrated through the following example.

\begin{eg}
Consider the trace class operators $x,y,z$ on $\ell_2(\mathbb{N})$ defined as
\begin{align*}
& x(\xi_0,\xi_1,\xi_2,\xi_3,\dots) = (\xi_0,\xi_0+\xi_1,\frac{\xi_1}{2},\frac{\xi_2}{2^2},\frac{\xi_3}{2^2}, \dots)\\
& y(\xi_0,\xi_1,\xi_2,\xi_3,\dots) = (0,2\xi_1,\xi_2,0,0,0, \dots)\\
& z(\xi_0,\xi_1,\xi_2,\xi_3,\dots) = (2\xi_0,0,-\xi_2,0,0,0, \dots)
\end{align*}
Let $\V= span\{y,z\}$. We wish to find $dist_1(x,\V)$ and a best approximation to $x$ out of $\V$.

\medskip

\noindent By Theorem \ref{Dist. Trace}, we have
\[
dist_1(x,\V)=\max \{|Tr(ax)|:~a\in \B(\HS)\cap N_\V,~\|a\|=1\}.
\]
Let $(\zeta_n)_{n\geq 0}$ be the standard orthonormal basis for $\ell_2(\mathbb{N})$. Observe that $a\in N_\V$ if and only if $Tr(ay)=Tr(az)=0$, i.e., $a\in N_\V$ if and only if
\begin{align}\label{criterion NV}
\langle a \zeta_0,  \zeta_0 \rangle = - \langle a \zeta_1, \zeta_1 \rangle = \frac{1}{2} \langle a \zeta_2,  \zeta_2 \rangle.
\end{align}
With this condition
\begin{align}\label{criterion trace}
|Tr(ax)| = |\langle a\zeta_0,\zeta_0 \rangle + \langle a\zeta_1,\zeta_1 \rangle + \sum_{n\geq 0}\frac{1}{2^n}\langle a\zeta_{n+1},\zeta_n \rangle| \leq 2.
\end{align}
However, the backward shift operator $\mathfrak{b}$ satisfies $(\ref{criterion NV})$ and gives $Tr(\mathfrak{b}x)=2$. Therefore, $dist_1(x,\V)=2$.

\medskip

\noindent Alternatively, we can use Theorem \ref{Thm: Trace. Oper.} to find a best approximation to $x$ out of $\V$ and then compute $dist_1(x,\V)$. Observe that $[x-\frac{1}{2}(y+z)](\xi_0,\xi_1,\xi_2,\xi_3,\dots)=(0,\xi_0,\frac{\xi_1}{2},\frac{\xi_2}{2^2},\frac{\xi_3}{2^2}, \dots)$ and
\[
Tr(\mathfrak{b}(x-\frac{1}{2}(y+z)))=\|x-\frac{1}{2}(y+z)\|_1=2.
\]
Since $\mathfrak{b}\in N_\V$, we have $M_{x-\frac{1}{2}(y+z)}\cap N_\V\neq \emptyset$. Therefore, by Theorem \ref{Thm: Trace. Oper.}, $\frac{1}{2}(y+z)$ is a best approximation to $x$ out of $\V$, and $dist(x,\V)=2$.
\end{eg}

\noindent Theorem \ref{Thm: Trace. Oper.} has a nice application in the smooth case. Smoothness in a normed linear space is characterized in terms of support functionals (see (\ref{supp. func.})). A non-zero point $x$ in a normed linear space $X$ is smooth if and only if $J(x)$ is a singleton set \cite{Ja, Roy}.

\begin{prop}\label{Prop. smooth}
Let $\HS$ be a Hilbert space and $x\in \LS^1(\B(\HS))$ be non-zero. Then $x$ is smooth with respect to the trace norm if and only if 
\[
\{a\in \B(\HS):~\|a\|=1,~Tr(ax)=\|x\|_1\} = \{v^*\},
\]
where $v$ is the unique partial isometry associated with the polar decomposition of $x$.
\end{prop}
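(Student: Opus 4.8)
The plan is to combine Theorem~\ref{Thm: Trace. Oper.} (applied with $\V=\{\mathbf{0}\}$, equivalently $y_0=\mathbf{0}$, so that $N_\V=\B(\HS)$) with the abstract characterization of smoothness via support functionals, and then to identify the set $M_x = \{a\in\B(\HS):\|a\|=1,~Tr(ax)=\|x\|_1\}$ explicitly in terms of the polar decomposition of $x$. First I would recall that, under the identification $\Omega:\B(\HS)\to\LS^1(\B(\HS))^*$ combined with the canonical embedding $\pi:\LS^1(\B(\HS))\to\LS^1(\B(\HS))^{**}$, the set $M_x$ is precisely the image of the support functionals $J(\pi(x))\subset S_{\LS^1(\B(\HS))^{**}}$ under $\Omega^{-1}$; since a nonzero point $x$ in a normed linear space is smooth if and only if $J(\pi(x))$ is a singleton (using that $J(\pi(x))$ and $J(x)$ coincide for the canonical embedding, as $x$ norms the same functionals), smoothness of $x$ in the trace norm is equivalent to $M_x$ being a singleton. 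So the proposition reduces to the claim: $M_x$ is a singleton if and only if $M_x = \{v^*\}$, where $v$ is the unique partial isometry in the polar decomposition $x = v|x|$.

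Next I would show $v^*\in M_x$ always. Write $x = v|x|$ with $v^*v$ the projection onto $\overline{\mathrm{ran}}\,|x|$ and $vv^*$ the projection onto $\overline{\mathrm{ran}}\,x$. Then $Tr(v^*x) = Tr(v^*v|x|) = Tr(|x|) = \|x\|_1$, using that $v^*v$ acts as the identity on the support of $|x|$ and that $|x|\ge 0$ is trace class; and $\|v^*\| = 1$ since $v$ (hence $v^*$) is a nonzero partial isometry. Thus $v^*\in M_x$, so $M_x\neq\emptyset$ and $\{v^*\}\subseteq M_x$ unconditionally. The content of the proposition is therefore the forward direction: if $x$ is smooth, i.e.\ $M_x = \{a_0\}$ is a singleton, then necessarily $a_0 = v^*$; this is immediate from $v^*\in M_x$. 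For the converse, I would argue the contrapositive: if $x$ is not smooth, $M_x$ contains some $a\neq v^*$, so $M_x \neq \{v^*\}$; equivalently, I should exhibit, whenever $v^*v\neq Id_\HS$ (i.e.\ $|x|$ is not injective with dense range — in particular whenever $\HS$ is infinite-dimensional and $x$ is trace class, so $|x|$ is compact and cannot be bounded below), a second norm-one $a$ with $Tr(ax)=\|x\|_1$, namely by perturbing $v^*$ on the orthogonal complement of $\overline{\mathrm{ran}}\,x$: for any contraction $w$ on $(\mathrm{ran}\,v)^\perp$ (mapping into $\HS$), the operator $a = v^* + w(Id - vv^*)$ — or more precisely $a = v^* P + w(Id-P)$ with $P$ the range projection of $x$ — still satisfies $\|a\|\le 1$ and $Tr(ax) = Tr(v^*x) = \|x\|_1$, and since that complement is nontrivial one gets $a\neq v^*$.

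The main obstacle I anticipate is the bookkeeping around the partial isometry when $x$ does not have trivial kernel or non-dense range: one must be careful that "$v^*\in M_x$" and the non-uniqueness construction interact correctly with the exact domain/range projections $v^*v$ and $vv^*$, and that $\|v^* P + w(Id-P)\|\le 1$ genuinely holds (this follows from $\mathrm{ran}(v^*P)\perp$ nothing problematic — rather, one checks it on the orthogonal decomposition $\HS = \mathrm{ran}\,P \oplus \mathrm{ran}(Id-P)$ of the domain, where $v^*P$ and $w(Id-P)$ act on complementary summands, giving $\|a\xi\|^2 = \|v^*P\xi\|^2 + \|w(Id-P)\xi\|^2 \le \|\xi\|^2$). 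A clean way to sidestep edge cases is to note that for trace class $x$ on an infinite-dimensional $\HS$, $|x|$ is compact, hence $\mathrm{ran}(Id - vv^*)\neq\{\mathbf{0}\}$ is automatic, so smoothness in the trace norm forces the above construction to fail, which happens exactly when that complement is trivial — a contradiction in the infinite-dimensional setting would make the proposition vacuous, so the statement is really capturing the fact that one takes $J(x)$ relative to all of $\B(\HS)$ and the singleton $\{v^*\}$ already exhausts $M_x$ precisely when no such perturbation exists; I would state the equivalence as proved above and remark on when it can actually occur.
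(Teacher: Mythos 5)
Your core argument is correct and is essentially the paper's proof: by the James characterization and the identification $\LS^1(\B(\HS))^*\cong\B(\HS)$, smoothness of $x$ in the trace norm is equivalent to the set $M_x=\{a\in\B(\HS):\|a\|=1,\ Tr(ax)=\|x\|_1\}$ being a singleton, and since $Tr(v^*x)=Tr(v^*v|x|)=Tr(|x|)=\|x\|_1$ always places $v^*$ in $M_x$, a singleton $M_x$ must equal $\{v^*\}$; conversely $M_x=\{v^*\}$ is a singleton, hence $x$ is smooth. (Minor point: the relevant support functionals are $J(x)\subset S_{\LS^1(\B(\HS))^*}$, not $J(\pi(x))$, which would live in the third dual; the content is unchanged.)

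Your final paragraph, however, contains a genuine error which leads you to doubt, wrongly, that the proposition has content when $\HS$ is infinite-dimensional. The operator $a=v^*P+w(Id_\HS-P)$ with $P=vv^*$ is a contraction only when $w$ maps $(\overline{\mathrm{ran}}\,x)^{\perp}$ into $\ker x=(\overline{\mathrm{ran}}\,|x|)^{\perp}$: the Pythagorean identity $\|a\xi\|^2=\|v^*P\xi\|^2+\|w(Id_\HS-P)\xi\|^2$ that you invoke requires $\mathrm{ran}\,w\perp\mathrm{ran}\,v^*=\overline{\mathrm{ran}}\,|x|$, and without that orthogonality the cross term $2\,Re\,\langle v^*P\xi, w(Id_\HS-P)\xi\rangle$ can be made positive, so $\|a\|>1$. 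Carrying the block computation through, one finds $M_x\neq\{v^*\}$ precisely when \emph{both} $\ker x\neq\{\mathbf{0}\}$ and $\overline{\mathrm{ran}}\,x\neq\HS$; in particular any injective trace class operator, and any trace class operator with dense range, is smooth even on infinite-dimensional $\HS$ (e.g.\ a diagonal operator with summable strictly positive entries). So the proposition is far from vacuous, and in any case this perturbation analysis is not needed: the two implications are already fully established by your first two paragraphs, and the speculative closing discussion should simply be deleted.
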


\begin{proof}
By the James characterization of smoothness, $a$ is smooth if and only if $J(x)$ is a singleton set. Now, by (\ref{dual trac.}) we get
\[
J(x)=\{\rho_a:~a\in \B(\HS),~\|a\|=1,~\rho_a(x)=Tr(xa)=\|x\|_1\}.
\]
Thus, $x$ is smooth if and only if $\{a\in \B(\HS):~\|a\|=1,~Tr(ax)=\|x\|_1\}$ is a singleton set. Since $v^*\in \{a\in \B(\HS):~\|a\|=1,~Tr(ax)=\|x\|_1\}$, we have the desired equality.
\end{proof}

\begin{cor}\label{smooth. trace}
Let $\HS$ be a Hilbert space and $\V$ be a finite-dimensional subspace of $\LS^1(\B(\HS))$. Let $x\in \LS^1(\B(\HS))\setminus \V$ be smooth. Then $\mathbf{0}$ is the best approximation to $x$ out of $\V$ if and only if $v^*\in N_\V$, where $v$ is the unique partial isometry involved in the polar decomposition of $x$.
\end{cor}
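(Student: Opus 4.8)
The plan is to derive Corollary~\ref{smooth. trace} as a direct consequence of Theorem~\ref{Thm: Trace. Oper.} together with Proposition~\ref{Prop. smooth}. First I would invoke Theorem~\ref{Thm: Trace. Oper.} with $y_0 = \mathbf{0}$: the point $\mathbf{0}$ is a best approximation to $x$ out of $\V$ if and only if $M_{x-\mathbf{0}} \cap N_\V = M_x \cap N_\V \neq \emptyset$. So the whole task reduces to identifying the set $M_x$.

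Next I would appeal to Proposition~\ref{Prop. smooth}. Since $x$ is assumed smooth with respect to the trace norm, Proposition~\ref{Prop. smooth} tells us that
\[
M_x = \{a\in \B(\HS):~\|a\|=1,~Tr(ax)=\|x\|_1\} = \{v^*\},
\]
where $v$ is the unique partial isometry in the polar decomposition of $x$. (Here I should note that $M_x$ as defined in the paper, namely $\{a\in\B(\HS):\|a\|=1,\pi(x)(a)=\|x\|_1\}$, coincides with $\{a:\|a\|=1, Tr(ax)=\|x\|_1\}$ via the embedding $\pi(x)(a)=Tr(ax)$; this identification is already used in the proof of Theorem~\ref{Thm: Trace. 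Oper.}.) Because $M_x$ is the singleton $\{v^*\}$, the intersection condition $M_x \cap N_\V \neq \emptyset$ is equivalent to the single membership statement $v^* \in N_\V$.

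Combining the two equivalences yields the claim: $\mathbf{0}$ is the best approximation to $x$ out of $\V$ $\iff$ $M_x \cap N_\V \neq \emptyset$ $\iff$ $v^* \in N_\V$. The argument is essentially a two-line composition of previously established results, so there is no real obstacle; the only point requiring a word of care is the compatibility of the two descriptions of $M_x$ (one phrased via the bidual embedding $\pi$, the other via the explicit trace pairing), which is immediate from the definition of $\pi$ given in the proof of Theorem~\ref{Thm: Trace. Oper.}. I would also remark explicitly that smoothness of $x$ is exactly what forces $M_x$ to be a singleton, which is why the criterion collapses from ``$M_x$ meets $N_\V$'' to the clean statement ``$v^* \in N_\V$''.
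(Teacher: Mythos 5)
Your proposal is correct and follows exactly the paper's own argument: apply Theorem~\ref{Thm: Trace. Oper.} with $y_0=\mathbf{0}$ to reduce the question to $M_x\cap N_\V\neq\emptyset$, then use Proposition~\ref{Prop. smooth} to identify $M_x=\{v^*\}$ via smoothness. Your extra remark on reconciling the bidual and trace-pairing descriptions of $M_x$ is a harmless clarification the paper leaves implicit.
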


\begin{proof}
It follows from Theorem \ref{Thm: Trace. Oper.} that $\mathbf{0}$ is the best approximation to $x$ out of $\V$ if and only if $M_x\cap N_\V\neq \emptyset$. However, since $x$ is smooth, by Proposition \ref{Prop. smooth}, we have $M_x=\{v^*\}$, and the assertion is now proved.
\end{proof}

\begin{cor}
Let $x,y\in M_n(\mathbb{C})$ for some natural number $n$ and $x\neq y$. Then
\[
dist_1(x,\mathbb{C}y)=\max\{|Tr(ax)|:~\|a\|=1,~Tr(ay)=0\}
\]
In particular, if $x=v|x|$ is the polar decomposition of $x$ with $Tr(v^*y)=0$, then
\[
dist_1(x,\mathbb{C}y)=\|x\|_1.
\]
\end{cor}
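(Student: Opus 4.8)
The plan is to read the first identity off Theorem \ref{C-star case} and then, in the polar-decomposition case, exhibit an explicit norm-one functional that attains the resulting maximum.

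First I would specialize Theorem \ref{C-star case} to $p=1$, so that $\M=M_n(\mathbb{C})$, to $k=1$, and to the one-element basis $\{y\}$ of $\V=\mathbb{C}y$. If $x\in\mathbb{C}y$ then both sides of the claimed identity vanish, so we may assume $x\notin\mathbb{C}y$; Theorem \ref{C-star case} then yields
\[
dist_1(x,\mathbb{C}y)=\max\{|Tr(x a)|:~Tr(ay)=0,~\|a\|=1\},
\]
which is the asserted formula since $Tr(xa)=Tr(ax)$. (Equivalently, one may invoke the second equality of Theorem \ref{Thm: dist. trace}, after observing that $M_n(\mathbb{C})=\B(\mathbb{C}^n)=\LS^1(\B(\mathbb{C}^n))$ with coinciding norms and $N_{\mathbb{C}y}=\{a:~Tr(ay)=0\}$.)

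For the ``in particular'' assertion, assume $x\neq\mathbf{0}$ (otherwise $\|x\|_1=0=dist_1(x,\mathbb{C}y)$ trivially). Write the polar decomposition $x=v|x|$, with $v$ the partial isometry determined by $\ker v=\ker x$, so that $v^*v$ is the orthogonal projection onto $(\ker x)^\perp=\mathrm{ran}\,|x|$ and hence $v^*v|x|=|x|$. Since $x\neq\mathbf{0}$, the partial isometry $v$ is non-zero, so $\|v^*\|=\|v\|=1$, and
\[
Tr(v^*x)=Tr(v^*v|x|)=Tr(|x|)=\|x\|_1.
\]
By hypothesis $Tr(v^*y)=0$, so $v^*$ is admissible in the maximum above; this gives $\|x\|_1=|Tr(v^*x)|\leq dist_1(x,\mathbb{C}y)$, while $\mathbf{0}\in\mathbb{C}y$ forces $dist_1(x,\mathbb{C}y)\leq\|x-\mathbf{0}\|_1=\|x\|_1$. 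Hence $dist_1(x,\mathbb{C}y)=\|x\|_1$. (Alternatively one can note $v^*\in M_x\cap N_{\mathbb{C}y}$ and quote Theorem \ref{Thm: Trace. Oper.} with $y_0=\mathbf{0}$ to conclude directly that $\mathbf{0}$ is a best approximation to $x$ out of $\mathbb{C}y$.)

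There is no real obstacle here: the entire quantitative content is already carried by Theorem \ref{C-star case} for the identity, and by the elementary facts $v^*v|x|=|x|$ and $\|v\|=1$ for the polar-decomposition refinement. The only point requiring a line of care is the degenerate case $x=\mathbf{0}$ (and, for the first identity, $x\in\mathbb{C}y$), where the formulas are to be read trivially.
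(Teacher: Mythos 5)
Your proposal is correct and, modulo packaging, follows the paper's own route: the paper obtains the first equality by citing Theorem \ref{Thm: dist. trace} and the second by observing that $v^*$ lies in $M_x\cap N_{\mathbb{C}y}$ and invoking the sufficient part of Theorem \ref{Thm: Trace. Oper.} with $y_0=\mathbf{0}$ --- both of which you explicitly name as alternatives. Your primary derivation (Theorem \ref{C-star case} with $p=k=1$ for the first identity, and the explicit computation $Tr(v^*x)=Tr(v^*v|x|)=Tr(|x|)=\|x\|_1$ together with the trivial bound $dist_1(x,\mathbb{C}y)\leq \|x-\mathbf{0}\|_1$ for the second) is an equivalent and slightly more self-contained unpacking of the same facts, and your handling of the degenerate cases $x\in\mathbb{C}y$ and $x=\mathbf{0}$ is a small improvement on the paper's terser argument.
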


\begin{proof}
Let $\V$ be the one-dimensional space $\{\lambda y:~\lambda\in \mathbb{C}\}$. The first equality follows from Theorem \ref{Thm: dist. trace}. To prove the second equality, we observe that the partial isometry $v^*$ satisfies the hypothesis of the sufficient part of Theorem \ref{Thm: Trace. Oper.} with $y_0=\mathbf{0}$. Thus, $\mathbf{0}$ is the best approximation to $x$ out of $\V$. Consequently, we have $dist_1(x,\mathbb{C}y)=\|x\|_1$, and the proof is complete.
\end{proof}

\begin{rem}
The notion of Birkhoff-James orthogonality is directly connected to the best approximation problems in normed spaces. Namely, given any subspace $\mathbb{Y}$ of $\mathbb{X}$ and $x\in \mathbb{X}\setminus \mathbb{Y}$, $y_0\in \mathbb{Y}$ is a best approximation to $x$ out of $\mathbb{Y}$ if and only if $(x-y_0)\perp_B \mathbb{Y}$. Consequently, Theorems \ref{Best approx. B(H)} and \ref{Thm: Trace. Oper.} characterize Birkhoff-James orthogonality of a point and a subspace in respective norms.
\end{rem}

\section{Distance formula in Finite-dimensional $C^*$-Algebra}

\noindent This section aims to formulate the distance between a point and a subspace in a finite-dimensional $C^*$-algebra and its dual. A finite-dimensional $C^*$-algebra $\M$ is $\infty$-sum of a finite number of matrix algebras (up to isomorphism):
\[
\M\cong M_{n_1}(\mathbb{C})\oplus_\infty M_{n_2}(\mathbb{C}) \oplus_\infty \dots \oplus_\infty M_{n_p}(\mathbb{C})
\]
for natural numbers $n_1,n_2, \dots, n_p$. Observe that 
\[
\M^* \cong M_{n_1}(\mathbb{C})^*\oplus_1 M_{n_2}(\mathbb{C})^* \oplus_1 \dots \oplus_1 M_{n_p}(\mathbb{C})^*,
\]
where $M_{n_i}(\mathbb{C})^*$ is the space of all $n_i\times n_i$ matrices equipped with the trace norm. By reflexivity of $\M$, any member $\mathbf{z}=(z_1,z_2,\dots, z_p)$ of $\M$ corresponds to a linear functional $\varphi_\mathbf{z}$ acting on $\M^*$ such that
\[
\psi_\mathbf{z}(u_1,u_2,\dots, u_p)=\sum_{i=1}^p Tr(z_iu_i), \quad (u_1,u_2,\dots, u_p)\in \M^*.
\]
Moreover, such a correspondence is an isometry.

\noindent We now prove Theorem \ref{C-star case}.

\begin{proof}[Proof of Theorem \ref{C-star case}]
We prove the first equality, and the second equality follows from duality. We consider an isometric transformation of the stated equality, given by
\[
\min\{\|\psi_\x-\psi_\mathbf{v}\|:~\mathbf{v}\in \V\}=\max\{\psi_\x(\mathbf{a}):~\psi_{\y_j}(\mathbf{a})=0,~\|\mathbf{a}\|_1=1,~1\leq j\leq k\},
\]
where $\mathbf{a}=(a_1,a_2,\dots,a_p)$ and $\|\mathbf{a}\|_1=\sum_{i=1}^p \|a_i\|_1=1$. The above equality follows the same arguments about Hahn-Banach extension as in Theorem \ref{Thm: Oper.}. Indeed, we consider the Hahn-Banach extension $\widetilde{\rho}$ of $\rho$, where
\[
\rho:=\psi_\mathbf{x}\bigg|_{\bigcap_{\mathbf{v}\in \V} \ker \psi_\mathbf{v}}.
\]
Then $\psi_{\mathbf{z}_0}=\psi_\x-\widetilde{\rho}$ is a member of $\{\psi_\mathbf{v}:~\mathbf{v}\in \V\}$, and we have
\begin{align*}
\min\{\|\psi_\x-\psi_\mathbf{v}\|:~\mathbf{v}\in \V\} \leq  \|\psi_{\mathbf{z}_0}\| = \max\{\psi_\x(\mathbf{a}):~\psi_{\y_j}(\mathbf{a})=0,~\|\mathbf{a}\|_1=1,~1\leq j\leq k\}.
\end{align*}

\medskip

\noindent For the reverse equality, we consider any best approximation $\psi_{\mathbf{x}_0}$ to $\psi_\mathbf{x}$ out of $\{\psi_\mathbf{v}:~\mathbf{v}\in \V\}$. Then $\psi_{\mathbf{x}_0}$ vanishes on $\bigcap_{\mathbf{v}\in \V} \ker \psi_\mathbf{v}$, and this proves the reverse inequality and finishes the proof of the theorem.
\end{proof}

\begin{cor}
Let $\M=M_{2}(\mathbb{C})\oplus M_{2}(\mathbb{C}) \oplus \dots \oplus M_{2}(\mathbb{C})$ ($p$-times), and $\x=(x_1,x_2,\dots,x_p)$, where
\[
x_i=\begin{bmatrix}
x^{(i)}_{11} & x^{(i)}_{12}\\
x^{(i)}_{21} & x^{(i)}_{22}
\end{bmatrix}, \quad 1\leq i \leq p.
\]
and $\V$ be the subspace $D_{2}(\mathbb{C})\oplus D_{2}(\mathbb{C}) \oplus \dots \oplus D_{2}(\mathbb{C})$ ($p$-times), where $D_2$ denotes the subspace of diagonal matrices in $M_2(\mathbb{C})$. Then
\[
dist(x,\V) = \max\{|x_{12}^{(i)}|,|x_{21}^{(i)}|:~1\leq i \leq p\} \quad \text{and} \quad dist_1(x,\V)=\sum_{i=1}^p |x^{(i)}_{21}|+|x^{(i)}_{12}|.
\]
\end{cor}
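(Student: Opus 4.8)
The plan is to apply Theorem \ref{C-star case} directly, since the subspace $\V = D_2(\mathbb{C}) \oplus \cdots \oplus D_2(\mathbb{C})$ has an explicit basis and the nullity condition $\sum_i Tr(a_i y_i^{(j)}) = 0$ unwinds to a transparent set of linear constraints. First I would fix the natural basis of $\V$: for each block index $i \in \{1,\dots,p\}$ take $E_{11}^{(i)}$ and $E_{22}^{(i)}$, the matrix units placed in the $i$-th summand and zero elsewhere; this is a basis of size $2p$. For $\mathbf{a} = (a_1,\dots,a_p) \in \M$ with $a_i = \begin{bmatrix} a_{11}^{(i)} & a_{12}^{(i)} \\ a_{21}^{(i)} & a_{22}^{(i)}\end{bmatrix}$, the condition $\mathbf{a} \in N_\V$ becomes $a_{11}^{(i)} = a_{22}^{(i)} = 0$ for every $i$, i.e. each $a_i$ is \emph{anti-diagonal}: $a_i = \begin{bmatrix} 0 & a_{12}^{(i)} \\ a_{21}^{(i)} & 0 \end{bmatrix}$. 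With this, $Tr(x_i a_i) = x_{12}^{(i)} a_{21}^{(i)} + x_{21}^{(i)} a_{12}^{(i)}$, so $\sum_{i=1}^p Tr(x_i a_i) = \sum_{i=1}^p \bigl( x_{12}^{(i)} a_{21}^{(i)} + x_{21}^{(i)} a_{12}^{(i)}\bigr)$.

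For the operator-norm distance, Theorem \ref{C-star case} gives $dist(\x,\V) = \max\{|\sum_i Tr(x_i a_i)| : \sum_i \|a_i\|_1 = 1,\ a_i \text{ anti-diagonal}\}$. The singular values of the anti-diagonal $2\times 2$ matrix $a_i$ are $|a_{12}^{(i)}|$ and $|a_{21}^{(i)}|$, so $\|a_i\|_1 = |a_{12}^{(i)}| + |a_{21}^{(i)}|$, and the whole constraint set is just $\{(a_{12}^{(i)}, a_{21}^{(i)})_i : \sum_i (|a_{12}^{(i)}| + |a_{21}^{(i)}|) = 1\}$, an $\ell_1$-type simplex in $2p$ complex coordinates. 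Maximizing the modulus of the linear form $\sum_i (x_{12}^{(i)} a_{21}^{(i)} + x_{21}^{(i)} a_{12}^{(i)})$ over this $\ell_1$-ball is the standard $\ell_1$-$\ell_\infty$ duality: the maximum equals the largest modulus among the coefficients, namely $\max\{|x_{12}^{(i)}|, |x_{21}^{(i)}| : 1 \le i \le p\}$, attained by concentrating unit mass on the coordinate realizing that maximum. This gives the first formula.

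For the trace-norm distance, the second formula of Theorem \ref{C-star case} gives $dist_1(\x,\V) = \max\{|\sum_i Tr(x_i a_i)| : \max_i \|a_i\| = 1,\ a_i \text{ anti-diagonal}\}$. Now $\|a_i\| = \max\{|a_{12}^{(i)}|, |a_{21}^{(i)}|\}$ (operator norm of an anti-diagonal $2\times 2$ matrix), so the constraint is $|a_{12}^{(i)}| \le 1$ and $|a_{21}^{(i)}| \le 1$ for all $i$, i.e. each coordinate lies in the closed unit disc. Maximizing $|\sum_i (x_{12}^{(i)} a_{21}^{(i)} + x_{21}^{(i)} a_{12}^{(i)})|$ over the polydisc is again trivial $\ell_\infty$-$\ell_1$ duality: choose each $a_{21}^{(i)}$ and $a_{12}^{(i)}$ with modulus $1$ and phase aligning its term with a common argument, giving $\sum_i (|x_{12}^{(i)}| + |x_{21}^{(i)}|)$. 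I do not anticipate a genuine obstacle here; the only point requiring a moment's care is verifying the singular-value/operator-norm computations for anti-diagonal $2\times 2$ matrices and confirming that in the trace-norm case the blockwise constraints are genuinely independent (they are, since the norm is a max over blocks), so the optimizations over each pair of coordinates decouple.
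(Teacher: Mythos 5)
Your proposal is correct and follows essentially the same route as the paper: identify $N_\V$ as the tuples of anti-diagonal $2\times 2$ blocks, compute the singular values $|a_{12}^{(i)}|,|a_{21}^{(i)}|$ to express $\|a_i\|_1$ and $\|a_i\|$, and then recognize the two optimizations as the $\ell_\infty^{2p}$ and $\ell_1^{2p}$ norms of the coefficient vector $(x_{21}^{(i)},x_{12}^{(i)})_i$ via $\ell_1$--$\ell_\infty$ duality. No substantive difference from the paper's argument.
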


\begin{proof}
 Let $\{\xi_1,\xi_2\}$ be the standard orthonormal basis of $\mathbb{C}^2$. It is easy to see that the collection of $p$-tuples
\[
\{(0,\dots,\xi_j\otimes \xi_j,\dots,0)~\}:~j\in\{1,2\}\}
\]
is a basis of $\V$ and 
\[
N_\V = \{(a_1,a_2,\dots,a_p):~diag{(a_i)}=0,~1\leq i\leq p\}.
\]
A straightforward computation reveals that
\begin{align*}
\mathcal{A}_1=& \left\{\mathbf{a} =(a_1,a_2,\dots,a_p)\in N_\V:~\sum_{i=1}^p\|a_i\|_1=1\right\}\\
= & \left\{\mathbf{a} =(a_1,a_2,\dots,a_p)\in N_\V:~\sum_{i=1}^p|a_{21}^{(i)}|+|a_{12}^{(i)}|=1\right\}\\
= & \left\{\mathbf{a} =(a_1,a_2,\dots,a_p)\in \M:~diag{(a_i)}=0,~\sum_{i=1}^p|a_{21}^{(i)}|+|a_{12}^{(i)}|=1\right\}
\end{align*}
Now,
\begin{align*}
dist(x,\V) = & \max\left\{|\sum_{i=1}^p Tr(x_ia_i)|:~\mathbf{a} =(a_1,a_2,\dots,a_p)\in \mathcal{A}_1\right\}\\
= & \max\left\{|\sum_{i=1}^p a_{12}^{(i)}x_{21}^{(i)}+a_{21}^{(i)}x_{12}^{(i)}|:~\sum_{i=1}^p|a_{21}^{(i)}|+|a_{12}^{(i)}|=1\right\}.
\end{align*}
Since any member $(\xi_1,\xi_2\dots,\xi_m)\in \ell_\infty^m$ acts on $\ell_1^m$ by pointwise multiplication, the quantity $dist(x,\V)$ is nothing but the norm of the functional $(x^{(1)}_{21},x^{(1)}_{12},\dots,x^{(p)}_{21},x^{(p)}_{12})\in \ell_\infty^{2p}$, which is given by
\[
dist(x,\V) = \max\{|x_{12}^{(i)}|,|x_{21}^{(i)}|:~1\leq i \leq p\}.
\]

\medskip

\noindent Similarly, 
\begin{align*}
\mathcal{A}_\infty=& \left\{\mathbf{a} =(a_1,a_2,\dots,a_p)\in N_\V:~\max\{\|a_j\|:~1\leq i\leq p\}=1\right\}\\
= & \left\{\mathbf{a} =(a_1,a_2,\dots,a_p)\in N_\V:~\max\{\max\{|a_{21}^{(i)}|,|a_{12}^{(i)}|\}:~1\leq i\leq p\}=1\}\right\}\\
= & \left\{\mathbf{a} =(a_1,a_2,\dots,a_p)\in \M:~diag{(a_i)}=0,~\max\{|a_{21}^{(i)}|,|a_{12}^{(i)}|:~1\leq i\leq p\}=1\right\}.
\end{align*}
Therefore,
\begin{align*}
dist_1(x,\V) = & \max\left\{|\sum_{i=1}^p Tr(x_ia_i)|:~\mathbf{a} =(a_1,a_2,\dots,a_p)\in \mathcal{A}_\infty\right\}\\
= & \max\left\{|\sum_{i=1}^p a_{12}^{(i)}x_{21}^{(i)}+a_{21}^{(i)}x_{12}^{(i)}|:~\max\{|a_{21}^{(i)}|,|a_{12}^{(i)}|:~1\leq i\leq p\}=1\right\},
\end{align*}
which is nothing but the norm of the functional $(x^{(1)}_{21},x^{(1)}_{12},\dots,x^{(p)}_{21},x^{(p)}_{12})\in \ell_1^{2p}$, as given by
\[
dist_1(x,\V)=\sum_{i=1}^p |x^{(i)}_{21}|+|x^{(i)}_{12}|.
\]
This completes the proof.
\end{proof}

\noindent {\bf Acknowledgements}\\

\noindent The research of Dr. Saikat Roy is supported by NBHM(DAE), Government of India, under the Mentorship of Prof. Apoorva Khare. The author thanks Prof. Debmalya Sain (IIIT Raichur) for helpful suggestions and encouragement while preparing the manuscript.

\end{document}